\title{Inhomogeneous extreme forms}
\author{Mathieu Dutour Sikiri\'c}
\address{M.~Dutour Sikiri\'c, Rudjer Boskovi\'c Institute, Bijenicka 54, 10000 Zagreb, Croatia}
\email{mdsikir@irb.hr}
\author{Achill Sch\"urmann}
\address{A.~Sch\"urmann, 
Universit\"at Rostock, Institut f\"ur Mathematik, 18051 Rostock, Germany}
\email{achill.schuermann@uni-rostock.de}
\author{Frank Vallentin} 
\address{F.~Vallentin, Delft Institute of Applied Mathematics, Technical University of Delft, P.O. Box 5031, 2600 GA Delft, The Netherlands}
\email{f.vallentin@tudelft.nl}
\thanks{The work of the first author has been supported by the
  Croatian Ministry of Science, Education and Sport under contract
  098-0982705-2707. The second and the third author were supported by
  the Deutsche Forschungsgemeinschaft (DFG) under grant SCHU 1503/4-2.
The third author was supported by Vidi grant 639.032.917 from the
Dutch Organization for Scientific Research (NWO)}
\subjclass{11H55, 52C17} 
\keywords{lattices, Delone polytopes, spherical $t$-designs, sphere packing, sphere covering, Voronoi reduction theory}
\date{September 24, 2011}
\font\tencyr=wncyr10
\def\cyr{\tencyr\cyracc}
\newcommand{\R}{\ensuremath{\mathbb{R}}}
\newcommand{\Z}{\ensuremath{\mathbb{Z}}}
\newcommand{\EC}{\ensuremath{\mathcal{E}}}
\newcommand{\AGL}{\ensuremath{\mathsf{AGL}}}
\newcommand{\GL}{\ensuremath{\mathsf{GL}}}
\newtheorem{defin}{Definition}[section]
\newtheorem{definition}[defin]{Definition}
\newtheorem{proposition}[defin]{Proposition}
\newtheorem{theorem}[defin]{Theorem}
\newtheorem{lemma}[defin]{Lemma}
\DeclareMathOperator{\conv}{conv}
\DeclareMathOperator{\Min}{Min}
\DeclareMathOperator{\Stab}{Stab}
\DeclareMathOperator{\trace}{trace}
\DeclareMathOperator{\Del}{Del}
\DeclareMathOperator{\vertex}{vert}
\DeclareMathOperator{\grad}{grad}
\DeclareMathOperator{\interior}{int}
\DeclareMathOperator{\cone}{cone}
\DeclareMathOperator{\vol}{vol}
\DeclareMathOperator{\lin}{lin}
\DeclareMathOperator{\Aut}{Aut}
\DeclareMathOperator{\ev}{ev}
\DeclareMathOperator{\aff}{aff}
\DeclareMathOperator{\dist}{dist}
\DeclareMathOperator{\diag}{diag}
\newcommand{\cv}{{\mathcal V}}
\DeclareMathOperator{\tdim}{dim}
\begin{document}

\begin{abstract}
G.F.~Voronoi (1868--1908) wrote two memoirs in which he describes two reduction theories for lattices, well-suited for sphere packing and covering problems. In his first memoir a characterization of locally most economic packings is given, but a corresponding result for coverings has been missing. In this paper we bridge the two classical memoirs.

By looking at the covering problem from a different perspective, we discover the missing analogue. Instead of trying to find lattices giving economical coverings we consider lattices giving, at least locally, very uneconomical ones. We classify local covering maxima up to dimension~$6$ and prove their existence in all dimensions beyond.

New phenomena arise: Many highly symmetric lattices turn out to give uneconomical coverings; the covering density function is not a topological Morse function. Both phenomena are in sharp contrast to the packing problem.  
\end{abstract}

\maketitle

\section{Introduction}

A basis of the $n$-dimensional Euclidean space $\R^n$ defines a lattice consisting of all integer linear combinations. A lattice defines a sphere packing in the following way: One centers congruent balls at the lattice points with maximum radius such that interiors do not intersect. Similarly, it defines a sphere covering: One places congruent balls with minimum radius such that each point in $\R^n$ is covered by a ball.

The \emph{(lattice sphere) packing problem} asks for a lattice which gives the most economical packing, i.e.\ one which maximizes the fraction of space covered by the balls.  The \emph{(lattice sphere) covering problem} asks for a lattice which gives the most economical covering, i.e.\ one which minimizes the average number of balls covering a point in~$\R^n$.

Many researchers were attracted by the packing problem. One important reason for this is that low-dimensional lattices which give good packings are often related to objects of exceptional beauty in combinatorics, geometry, and number theory. A vivid account of this is the monograph \cite{CS} by Conway and Sloane with over 100 pages of references which since the appearance of its first edition in 1988 spurred a tremendous amount of activity.

Our computational studies in \cite{Vallentin}, \cite{SV1}, \cite{SV2}, \cite{DSV} show that the covering problem behaves very differently. Many of the best known coverings could only be discovered with computer assistance. They were found by a numerical convex continuous optimization procedure; some of them do not have a rational representation, and their beauty is not immediately apparent.

Furthermore, in \cite{SV1} it came as a surprise that the root lattice $\mathsf{E}_8$ does not even give a locally optimal covering whereas the Leech lattice $\Lambda_{24}$ does. Both lattices are the unique optimum, up to scaling and isometries, for the lattice packing problem which was proved by Blichfeldt \cite{Blichfeldt} (optimality of $\mathsf{E}_8$), Vetchinkin \cite{Vetchinkin}  (uniqueness of $\mathsf{E}_8$) and Cohn, Kumar \cite{CohnKumar} (optimality and uniqueness of $\Lambda_{24}$). In many respects both lattices behave similarly.  The shortest vectors of both lattices give spherical point configurations which are optimal for many other extremal questions in geometry, like the kissing number problem and more generally for potential energy minimization which is proved in Cohn and Kumar's work on universally optimal point configurations on spheres \cite{CohnKumar2}.

From further experimental studies we saw that $\mathsf{E}_8$ is almost a local covering maximum, that is, the covering density decreases for almost all perturbations of $\mathsf{E}_8$. We say that $\mathsf{E}_8$ is a covering pessimum. This raised the question: Do local covering maxima exist (although local packing minima do not exist)? The first local covering maximum $\mathsf{E}_6$ is found in~\cite{Sch}.

In this paper we develop the theory of local covering maxima. It turns out that our theory gives a new link between Voronoi's two classical memoirs \cite{Vor1}, \cite{Vor2}. 

We think that this new theory of local covering maxima is interesting for several reasons: First of all it shows what happens to the ``nice'' lattices, like $\mathsf{D}_4$, $\mathsf{E}_6$, $\mathsf{E}_7$, $\mathsf{E}_8$, $\mathsf{K}_{12}$, $\mathsf{BW}_{16}$, $\Lambda_{24}$, in the theory of lattice coverings: With the exception of the Leech lattice, all these ``nice'' lattices give locally very uneconomical sphere coverings. Lattices which have large covering density also come up in connection to Minkowski's conjecture. It states that every lattice $L \subseteq \R^n$ with $\det L = 1$ satisfies
\begin{equation*}
\sup_{x \in \R^n} \inf_{y \in L} \left|(x_1 - y_1) \cdots (x_n - y_n)\right| \leq 2^{-n},
\end{equation*}
and equality holds only for $L = \diag(a_1, \ldots, a_n) \Z^n$ with $|a_1 \cdots a_n| = 1$. Curtis T.~McMullen \cite{McMullen} showed that Minkowski's conjecture follows from the following covering conjecture: The (normalized) covering density of every $n$-dimensional lattice which is generated by its minimal vectors is bounded above by $\sqrt{n}/2$ and equality holds only for lattices which are similar to the standard lattice $\Z^n$. Based on the notions developed in this paper, the second author describes an algorithm to decide the covering conjecture for every fixed dimension $n$ in \cite[Chapter 5.7]{Sch}.

In Section~\ref{sec:extremality} we start by formulating a characterization of local covering maxima in the spirit of Voronoi. In \cite{Vor1} Voronoi gives a similar characterization of local packing maxima extending earlier work of Korkine and Zolotarev. Then, Section~\ref{sec:proof} contains a proof of our characterization. It is based on using the Karush-Kuhn-Tucker condition from nonlinear optimization.

In Section~\ref{sec:examples} we formulate and prove a sufficient condition for being a local covering maximum in the spirit of Venkov's theory of strongly perfect lattices: It uses the $t$-design property of spherical point configurations. In \cite{Venkov} Venkov gives a similar condition for local packing maxima. It turns out that many interesting lattices satisfy this condition.

In Section~\ref{sec:finiteness} we show that there are only finitely many local covering maxima in every dimension and we give a classification which is complete up to dimension~$6$. For dimension $7$ and $8$ we give a list of all known local covering maxima. There is strong numerical evidence that these lists are complete.

One important difference between the packing problem and the covering problem is discussed in Section~\ref{sec:pessima}: Ash \cite{Ash} proved that the packing density function is a topological Morse function. We show that the covering density function does not have this property if the dimension is at least four.

In the last section we give and analyze a construction showing that there are local covering maxima in all dimensions~$n \geq 6$.

\section{Extremality = Perfectness and Eutaxy}
\label{sec:extremality}

In his first memoir Voronoi gives a characterization of locally optimal packings, building on previous works by Korkine and Zolotarev. For this he uses the notions of extremality, perfectness and eutaxy, which are naturally defined in the language of positive definite quadratic forms (PQFs).

Some preliminaries: There is a one-to-one correspondence between lattice bases up to orthogonal transformations and PQFs by taking the Gram matrix of the lattice basis. We identity the space of quadratic forms in $n$ variables with the space of real symmetric $n \times n$-matrices. It is an $\binom{n+1}{2}$-dimensional Euclidean space with inner product $\langle Q, Q' \rangle = \trace(QQ')$, where $Q$ and $Q'$ are quadratic forms. By this identification we can evaluate a quadratic form $Q$ at a vector $x \in \R^n$ by
\begin{equation*}
Q[x] = x^t Q x = \langle Q, xx^t \rangle.
\end{equation*}

Now we review Voronoi's characterization for the homogeneous packing case  where we refer to the monographs \cite{Mar} of Martinet and \cite{Sch} of Sch\"urmann for proofs and further information. Then we present our characterization for the inhomogeneous covering case.

\subsection{Homogeneous case}

Let $Q$ be a positive definite quadratic form in $n$ variables. The
\emph{Hermite invariant} of $Q$ is
\begin{equation*}
\gamma(Q) = \frac{\lambda(Q)}{(\det Q)^{1/n}},
\end{equation*}
where
\begin{equation*}
\lambda(Q) = \min_{v \in \Z^n \setminus \{0\}} Q[v] ,
\end{equation*}
is the \emph{homogeneous minimum} of $Q$. It is
scale-invariant. Maximizing the packing density among lattices is
equivalent to maximizing the Hermite invariant among PQFs.

Voronoi gave a characterization of the local maxima of the Hermite
invariant using the geometry of the \emph{shortest vectors}
\begin{equation*}
\Min Q = \{v \in \Z^n : Q[v] = \lambda(Q)\}.
\end{equation*}

\begin{definition}
Let $Q$ be a PQF.
\begin{itemize}
\item[(i)] It is called \emph{extreme} if it is a local maximum of the
  Hermite invariant.
\item[(ii)] It is called \emph{perfect} if the linear space
  spanned by
\begin{equation*}
\{vv^t : v \in \Min Q\}
\end{equation*}
has maximal possible rank $\binom{n+1}{2}$.
\item[(iii)] It is called \emph{eutactic} if there are positive
  constants $\alpha_v$ so that
\begin{equation*}
  Q^{-1} = \sum_{v \in \Min Q} \alpha_v vv^t.
\end{equation*}
It is called \emph{semieutactic} if the constants are nonnegative, and \emph{weakly eutactic} if the constants are real, i.e.\ if they exist at all.
\end{itemize}
\end{definition}

The extended notion of semieutaxy and weak eutaxy is due to Berg\'e and Martinet \cite{BergeMartinet}

\begin{theorem}[Voronoi \cite{Vor1}]
\label{th:Voronoi}
A PQF is extreme if and only if it is perfect and eutactic.
\end{theorem}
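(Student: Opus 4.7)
The plan is to exploit the scale invariance of $\gamma$ and reduce to a constrained optimization problem: $Q$ is extreme iff it is a strict local maximum of $\lambda$ on the determinant level set $\{Q' : \det Q' = \det Q\}$. First I would check that for $Q'$ in a neighborhood of $Q$ one has $\Min Q' \subseteq \Min Q$---vectors outside $\Min Q$ satisfy $Q[v] > \lambda(Q)$ strictly, and a compactness argument confines attention to a finite candidate set that stays away from the minimum under small perturbations. Consequently, locally
\[
\lambda(Q') = \min_{v \in \Min Q} \langle Q', vv^t \rangle,
\]
so $\lambda$ is piecewise linear and concave near $Q$. Since $\nabla \det(Q) = (\det Q)\, Q^{-1}$, the tangent hyperplane to the determinant surface at $Q$ is $\{T \in \sd : \langle Q^{-1}, T\rangle = 0\}$.

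Next I would characterize extremality via a tangent cone condition. Any curve on the determinant surface through $Q$ with tangent $T$ can be written as $Q(t) = Q + tT + s(t) Q^{-1}$ with $s(t) = O(t^2)$; expanding $\log\det(Q(t)) = \log\det Q$ reveals $s(t) \sim \tfrac{t^2}{2}\,\trace((Q^{-1}T)^2)/\trace(Q^{-2}) > 0$ for $T \neq 0$. Combining this with the description of $\lambda$ above, $Q$ is extreme iff for every nonzero tangent $T$ there exists $v \in \Min Q$ with $\langle T, vv^t\rangle < 0$. Writing $C = \cone\{vv^t : v \in \Min Q\}$ and $C^\circ = \{T : \langle T, vv^t\rangle \geq 0 \text{ for all } v \in \Min Q\}$ for its dual cone, extremality translates to $C^\circ \cap \{Q^{-1}\}^\perp = \{0\}$.

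The heart of the proof is identifying this tangent-cone condition with perfectness plus eutaxy. For sufficiency ($\Leftarrow$), given a nonzero tangent $T$, the eutaxy relation $Q^{-1} = \sum_v \alpha_v vv^t$ with $\alpha_v > 0$ yields $\sum_v \alpha_v \langle T, vv^t\rangle = \langle T, Q^{-1}\rangle = 0$; perfectness forbids all summands from vanishing (else $T$ would be orthogonal to a spanning set), so some $\langle T, vv^t\rangle$ must be strictly negative. For necessity ($\Rightarrow$), a separation/Farkas argument on $C$ finishes the job: if $C$ fails to span $\sd$, any nonzero $T$ in its annihilator lies in $C^\circ \cap \{Q^{-1}\}^\perp$ (using that semieutaxy, hence $Q^{-1} \in C \subseteq \lin(C)$, already follows from the first-order KKT condition); if $C$ spans but $Q^{-1}$ lies on its boundary, a supporting hyperplane produces a nonzero tangent $T \in C^\circ$. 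Hence $Q^{-1}$ must lie in the interior of the full-dimensional cone $C$, which is precisely eutaxy together with perfectness.

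The main obstacle I anticipate is the second-order sign analysis in the second paragraph: the correction term $s(t) Q^{-1}$ keeping the curve on the determinant surface has $s(t) > 0$, so the second-order contribution to $\lambda$ actually pushes upward, not downward. Getting this sign right is what rules out the borderline case $\min_v \langle T, vv^t\rangle = 0$ for a tangent $T \neq 0$ (which would naively look compatible with a maximum) and tightens the extremality characterization into a clean condition involving only the first-order data $\{vv^t\}$ and $Q^{-1}$.
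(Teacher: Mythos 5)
The paper states Theorem~\ref{th:Voronoi} without proof, attributing it to Voronoi and referring the reader to the monographs of Martinet and Sch\"urmann. So there is no in-paper argument to match against; the relevant comparison is with the paper's proof of the inhomogeneous analogue (Theorem~\ref{th:qf_principal} in Section~\ref{sec:proof}), which uses the same core ideas in a slightly different packaging. Your argument is correct and is essentially the classical proof. The determinant level-set reduction, the local identity $\lambda(Q') = \min_{v \in \Min Q}\langle Q', vv^t\rangle$, the computation $s(t) \sim \frac{t^2}{2}\trace((Q^{-1}T)^2)/\trace(Q^{-2}) > 0$, and the resulting characterization $C^{\circ} \cap \{Q^{-1}\}^{\perp} = \{0\}$ are all sound; the equivalence of that cone condition (combined with semieutaxy) with ``$Q^{-1} \in \interior C$ and $C$ full-dimensional,'' and hence with perfectness plus eutaxy, is also correct.

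Where the packaging differs: the paper's inhomogeneous proof isolates the strict convexity of $Q \mapsto (\det Q)^{-1/n}$ in Lemma~\ref{lem:boundary}, uses it to conclude that any extreme point lies on an extreme ray of the Erdahl cone (giving perfectness immediately), and then runs a Farkas argument combined with convexity of $\mu$ and strict convexity of the determinant term to get eutaxy. Your $s(t) > 0$ computation is exactly the manifestation of the same strict convexity of the determinant, but you deploy it at the level of curves on the constraint surface and collapse the second-order information into a first-order cone criterion. Your route buys a single clean tangent-cone statement that handles perfectness and eutaxy simultaneously; the paper's route buys a cleaner conceptual picture (extreme rays of a cone on which $H$ is strictly convex along segments). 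Both are valid. Two places you compress a lot: the parenthetical claim that semieutaxy follows from the first-order KKT condition requires the Lagrange rule for the nonsmooth concave function $\lambda$ and the sign determination of the multiplier (using $\langle Q^{-1}, vv^t\rangle > 0$ to fix the sign), and the final identification ``$Q^{-1} \in \interior C$ and $C$ full-dimensional $\iff$ perfect and eutactic'' needs the observation that a point is in the interior of a full-dimensional polyhedral cone iff it is a strictly positive combination of the generators. Neither is a gap, but both deserve a sentence if this is to be a self-contained proof.
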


\subsection{Inhomogeneous case}

We define the \emph{inhomogeneous Hermite invariant} of a PQF $Q$ as
\begin{equation*}
\gamma_i(Q) = \frac{\mu(Q)}{(\det Q)^{1/n}},
\end{equation*}
where
\begin{equation*}
\mu(Q) = \max_{x \in\R^n} \min_{v\in\Z^n} Q[x-v]
\end{equation*}
is the \emph{inhomogeneous minimum} of $Q$. Like $\gamma$ it is
scale-invariant. Finding extrema for the covering density among
lattices is equivalent to finding extrema for the inhomogeneous
Hermite invariant among PQFs.

In the literature, so far only the local minima of the inhomogeneous
Hermite invariant have been considered, as they give economical
coverings. However, to link the homogeneous with the inhomogeneous
case we have to consider the local maxima.

In this paper we characterize local maxima of the inhomogeneous
Hermite invariant using the geometry of closest vectors. For each point $c \in \R^n$
attaining $\mu(Q)$ we define the \emph{closest vectors}
\begin{equation*}
\Min_c Q = \{v \in \Z^n : Q[v-c] = \mu(Q)\}.
\end{equation*}
Geometrically, the closest vectors give the vertices of the \emph{Delone (Cyrillic: {\cyr Delone}, French: Delaunay) polytope} defined by the PQF~$Q$ which has center~$c$: We have $Q[v-c] = \mu(Q)$ only for $v \in \Min_c Q$ and for all other lattice points $v \in \Z^n$ we have strict inequality $Q[v - c] > \mu(Q)$. The set of all Delone polytopes is called the \textit{Delone subdivision} of~$Q$ which is a $\Z^n$-periodic polyhedral subdivision of $\R^n$. The inhomogeneous minimum of $Q$ is at the same time the maximum squared circumradius of its Delone polytopes. 

\begin{definition}
Let $Q$ be a PQF.
\begin{itemize}
\item[(i)] It is called \emph{inhomogeneous extreme} if it is a local
  maximum of the inhomogeneous Hermite invariant.

\item[(ii)] It is called \emph{inhomogeneous perfect}, if for
  each $c \in \R^n$ attaining $\mu(Q)$, the linear space spanned by
\begin{equation*}
  \left\{\begin{pmatrix}1\\v\end{pmatrix}\begin{pmatrix}1\\v\\\end{pmatrix}^t
  : v \in \Min_c Q\right\}
\end{equation*}
 has maximal possible rank $\binom{n+2}{2}-1$.
\item[(iii)] It is called \emph{inhomogeneous eutactic}, if for
  each $c \in \R^n$ attaining $\mu(Q)$, there are positive constants
  $\alpha_v$ so that
\begin{equation*}
\begin{pmatrix}
1 & c^t\\
c & cc^t + \frac{\mu(Q)}{n} Q^{-1}
\end{pmatrix}
 = \sum_{v \in \Min_c Q} \alpha_v \begin{pmatrix}1\\v\end{pmatrix}\begin{pmatrix}1\\v\end{pmatrix}^t.
\end{equation*}
It is called \emph{inhomogeneous semieutactic} if the constants are nonnegative, and \emph{inhomogeneous weakly eutactic} if the constants are real, i.e.\ if they exist.
\end{itemize}
\end{definition}

Now we are ready to state our principal result.

\begin{theorem}
  \label{th:principal}
  A PQF is inhomogeneous extreme if and only if it is inhomogeneous
  perfect and inhomogeneous eutactic.
\end{theorem}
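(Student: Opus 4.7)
The plan is to follow Voronoi's argument for Theorem \ref{th:Voronoi}, adapted to the inhomogeneous max-min setting, using the Karush-Kuhn-Tucker (KKT) conditions suggested in the introduction. The central technical move is to reduce extremality of $\mu/\det^{1/n}$, a max-min quantity, to smooth extremality of a single-center functional. Since $\mu(Q) = \max_c \min_v Q[c-v]$ and the maximizers $c$ of $f_c(Q_0) = \min_v Q_0[c-v]$ form a finite set (the circumcenters of the deep-hole Delone polytopes of $Q_0$), a continuity argument shows that $Q_0$ is inhomogeneous extreme if and only if, for \emph{each} center $c$ of $Q_0$, the point $(Q_0, c, \mu(Q_0))$ is a local maximum of
\begin{equation*}
\text{maximize } \mu \quad \text{subject to} \quad \det Q = 1, \quad Q[x - v] \geq \mu\ (v \in \Z^n)
\end{equation*}
in the variables $(Q, x, \mu)$.

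Next I write out the KKT conditions for this smooth problem. At such a local maximum they produce nonnegative multipliers $\alpha_v$ for $v \in \Min_c Q_0$ and a scalar $\eta$ for the determinant constraint, satisfying $\sum_v \alpha_v = 1$ (from $\partial/\partial \mu$), $\sum_v \alpha_v(c - v) = 0$ (from $\partial/\partial x$), and $\sum_v \alpha_v (v - c)(v - c)^t = \eta\, Q_0^{-1}$ (from $\partial/\partial Q$). Pairing the last identity with $Q_0$ via the trace inner product forces $\eta = \mu(Q_0)/n$. Assembling these three identities into a single block $(n+1)\times(n+1)$ matrix equation recovers exactly the inhomogeneous eutactic equation from the definition, establishing semieutaxy at every center.

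For the converse direction, assume $Q_0$ is inhomogeneous perfect and inhomogeneous eutactic. Given any nonzero symmetric $H$ with $\trace(Q_0^{-1}H) = 0$, I pair $H$ with the eutactic equation at a fixed center $c$ to obtain
\begin{equation*}
\sum_{v \in \Min_c Q_0} \alpha_v \bigl\langle H,\; (v - c)(v - c)^t \bigr\rangle = \frac{\mu(Q_0)}{n}\,\trace(Q_0^{-1}H) = 0.
\end{equation*}
Since every $\alpha_v > 0$, either some summand is strictly negative, in which case $(Q_0 + tH)[c - v] < \mu(Q_0)$ and hence $\mu(Q_0 + tH) < \mu(Q_0)$ for small $t > 0$; or every summand vanishes, in which case $H$ is orthogonal to each $\binom{1}{v}\binom{1}{v}^t$ in the lifted space, and perfectness combined with the orthogonality conditions carried by the lift of $H$ forces $H = 0$, a contradiction. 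Necessity of perfectness and of strict positivity of the $\alpha_v$ is obtained symmetrically: each failure produces a nonzero first-order direction along which $\mu$ is preserved but $\det Q$ strictly decreases, violating extremality.

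The main obstacle I anticipate is the non-smoothness of $\mu$ and the lack of a fixed combinatorial type of the Delone subdivision under perturbations when $Q_0$ lies on a wall of its secondary cone. This is exactly why the reduction to a single-center problem in the first step is indispensable: after reduction, $f_c(Q) = \min_v Q[c-v]$ is a minimum of finitely many affine functionals of $Q$ and standard active-set KKT analysis applies. A secondary issue is matching the lifted $(n+1)\times(n+1)$ matrix formulation of perfectness with honest perturbations $H$ of $Q$; this is handled by identifying the orthogonal complement of the span of $\binom{1}{v}\binom{1}{v}^t$ for $v \in \Min_c Q_0$ explicitly, and then checking that, after quotienting by the direction coming from determinant normalization, only $H = 0$ remains.
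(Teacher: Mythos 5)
Your high-level plan—reformulate the max-min problem so that KKT applies, and read off (semi-)eutaxy from the multipliers—matches the paper's strategy in spirit. But the execution has genuine gaps at exactly the points where the paper has to work hardest.

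First, the single-center reduction is asserted but not justified. You write ``a continuity argument shows\dots'', but the inhomogeneous minimum $\mu(Q)$ is a maximum over $x$ of a minimum over $v$, and as $Q$ is perturbed the combinatorics of the Delone subdivision can change, the deep holes can move, split, or merge, and new Delone polytopes can take over the maximum. The paper handles this by switching to the space of quadratic functions on the Erdahl cone (Section~\ref{ssec:relation} and Theorem~\ref{th:qf_principal}): fixing the center amounts to fixing a quadratic function $f$ with $Q_f = Q$, the functional $\mu(f) = -\min_x f(x)$ is convex in $f$ (it is a supremum of linear evaluations), and $\mu(Q) = \max\{\mu(f): Q_f = Q\}$ packages the reduction cleanly. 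In your formulation the variable $x$ appears in the constraints $Q[x-v]\geq\mu$, which are bilinear in $(Q,x)$ rather than affine, so standard active-set KKT does not apply directly; linearity in the optimization variable is precisely what the lift to quadratic functions (via $\ev_v$) buys.

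Second, the sufficiency direction contains a false step. From $\sum_v \alpha_v H[v-c]=0$ with all $\alpha_v>0$ you conclude that either all terms vanish, or some $H[v-c]<0$, ``in which case $(Q_0+tH)[c-v]<\mu(Q_0)$ and hence $\mu(Q_0+tH)<\mu(Q_0)$''. The implication in ``hence'' is unjustified: $\mu$ is a $\max_x$ over all circumcenters, and making one vertex closer to the \emph{old} center $c$ does not control the circumradius of the deformed Delone polytope, whose circumcenter moves and whose vertex set may change. Indeed, since $\mu$ is convex, the first-order term being nonpositive does not by itself force a local decrease. The paper's sufficiency argument instead uses perfectness to show that, near $f_0$, the Erdahl cone (normalized by scale) is cut out exactly by the active constraints $(\ev_v,f)\geq 0$, and then applies the strict KKT sufficiency criterion in Proposition~\ref{prop:nlp}(i); perfectness is essential to get the constraint set right and cannot be bypassed.

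Third, necessity of perfectness and of strict positivity of the $\alpha_v$ is only gestured at. You write ``each failure produces a nonzero first-order direction along which $\mu$ is preserved but $\det Q$ strictly decreases''. Failure of perfectness gives a quadratic function $g$, not a multiple of $f_0$, vanishing at all closest vectors—but $g$ lives in the $\binom{n+2}{2}$-dimensional space of quadratic functions, and there is no obvious way to project it to a direction $H$ in $\Sym^n$ with the asserted property (e.g.\ $Q_g$ could vanish while $b_g\neq 0$). The paper instead proves Lemma~\ref{lem:boundary}, a strict convexity statement: the Hermite invariant of quadratic functions is strictly convex along any segment inside the positive Erdahl cone (using Minkowski's strict convexity of $\det^{-1/n}$), so a local maximum must lie on an extreme ray, which is exactly perfectness. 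Likewise, upgrading KKT semieutaxy to strict eutaxy is handled in the paper by a Farkas lemma argument combined with the product decomposition $H(f_0+\lambda(h+\alpha f_0))=\varphi_\alpha(\lambda)\psi_\alpha(\lambda)$; this step is entirely missing in your proposal.

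In short, the plan points in the right direction, but the three key technical devices—the Erdahl cone formulation, the strict convexity Lemma~\ref{lem:boundary}, and the Farkas/product argument for necessity of eutaxy—are exactly what is needed to close the gaps you have left open, and none of them is supplied.
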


We prove this theorem in Section~\ref{sec:proof} after giving a reformulation in the following subsection.

Let us contrast this characterization to the known characterization of PQFs which give local \emph{minima}. Barnes and Dickson \cite{BarnesDickson} gave such a characterization of PQFs in the case of generic PQF $Q$, i.e.\ if all Delone polytopes of $Q$ are simplices: 

\emph{A generic PQF $Q$ is local minimum for $H$ if and only if one can write
\begin{equation*}
Q^{-1} = \sum_c \lambda_c \left(\sum_{i=0}^n \alpha_i v_iv_i^t - cc^t \right),
\end{equation*}
with nonnegative $\lambda_c$  where the sum goes over all $c$ attaining $\mu(Q)$ and where $\Min_c(Q) = \conv\{v_0, \ldots, v_n\}$ and where $\alpha_i$ are so that $\sum_{i=0}^n \alpha_i = 1$ and $c = \sum_{i=0}^n \alpha_i v_i$.}

Hence, this characterization resembles (semi-)eutaxy; there is no perfectness here. This and the other non-generic cases where the Delone polytopes are not all simplices are discussed in \cite[Chapter 5.2.4]{Sch}.

\subsection{Quadratic functions}

Before we go on, a remark why in the definition of inhomogeneous
perfect forms the maximal possible rank is $\binom{n+2}{2}-1$ instead of $\binom{n+2}{2}$
is in order: It is $\binom{n+2}{2}-1$ because the vectors $v$ of $\Min_c Q$
satisfy the equation $Q[v-c] = \mu(Q)$ which translates into one
\emph{linear} equation in the space of quadratic functions. This
observation, due to Erdahl and Ryshkov~\cite{Erd}, \cite{ER},
\cite{RE}, will be the key to the proof of our principal result. Let
us elaborate on this.

Instead of using one quadratic form, which (implicitly) defines the
inhomogeneous minimum $\mu(Q)$ and the points $c \in \R^n$ attaining
$\mu(Q$), we make things explicit by using several quadratic
functions; one for each $c$. We shall explain the exact relation between a PQF and ``its'' quadratic functions in Section~\ref{ssec:relation} once we have all necessary definitions.

A \emph{quadratic function} in $n$ variables can be
written as
\begin{equation*}
  f(x) = \alpha_f + 2b_f \cdot x + Q_f[x],
\end{equation*}
where $\alpha_f \in \R$, $b_f \in \R^n$, and $Q_f$ is a quadratic form
in $n$ variables. By $b_f \cdot x$ we denote the standard inner product of the two $n$-dimensional vectors~$b_f$ and~$x$. We equip the space of quadratic functions with the
inner product
\begin{equation*}
  (f,g) = \alpha_f \alpha_g + 2 b_f \cdot b_g + \langle Q_f, Q_g \rangle.
\end{equation*}
For $x \in \R^n$ we define the quadratic function
\begin{equation*}
  \ev_x(y) = (1 + x \cdot y)^2,
\end{equation*}
which can be used to evaluate a quadratic function $f$ at $x$ by
$(\ev_x, f) = f(x)$. We define the \emph{Erdahl cone} by
\begin{equation*}
\EC_{\geq 0} = \{f : \text{$f(v) \geq 0$ for all $v \in \Z^n$}\}.
\end{equation*}
If a quadratic function $f$ lies in the Erdahl cone, then $Q_f$ is
positive semidefinite (see e.g.\ \cite[Proposition 1.3]{Erd}). We
define the \emph{positive Erdahl cone} by
\begin{equation*}
\EC_{>0} = \{f \in \EC_{\geq 0}: \text{$Q_f$ is positive definite}\}.
\end{equation*}

Let $f$ be a quadratic function lying in the Erdahl cone.  The zero
set of $f$ is an ellipsoid whose interior is free of integral points, points lying in~$\Z^n$. The convex hull of the integral zeroes of $f$ is called the
\emph{Delone polyhedron of $f$},
\begin{equation*}
\Del f = \conv\{v \in \Z^n : f(v) = 0\}.
\end{equation*}
Note that a Delone polyhedron might be empty, bounded or unbounded. We
define the function
\begin{equation*}
\mu(f) = -\min_{x \in \R^n} f(x) = \max_{x \in \R^n} -f(x).
\end{equation*}
We will make extensive use of the fact that $\mu$ is a convex function. This follows because evaluation is linear in~$f$. The function $\mu$ is negative exactly for those $f$ having an empty zero set so that the Delone polyhedron of~$f$ is empty.

Let $f$ be a quadratic function lying in the positive Erdahl cone. If
the zero set of $f$ is a non-degenerate ellipsoid (i.e.\ it is
non-empty and bounded), then its center is $c_f = -Q_f^{-1} b_f$ and
its squared circumradius (with respect to $Q_f$) is $\mu(f)$.  In this case
one can write
\begin{equation*}
  f(x) = Q_f[x - c_f] - \mu(f), \quad \text{and} \quad \mu(f) = Q_f[c_f] - \alpha_f.
\end{equation*}
The \emph{Hermite invariant} of $f \in \EC_{>0}$  is
\begin{equation*}
H(f) = \frac{\mu(f)}{(\det Q_f)^{1/n}}.
\end{equation*}
Note that it is invariant under multiplication by positive scalars.

\begin{definition}
\label{def:perfect_eutactic}
  Let $f$ be a quadratic function lying in the positive Erdahl cone.
\begin{itemize}
\item[(i)] It is called \emph{extreme} if it is a
  local maximum of the Hermite invariant.
\item[(ii)] It is called \emph{perfect}, if the linear space spanned
  by $\ev_v$, with $v \in \vertex \Del f$, has maximal possible rank
  $\binom{n+2}{2}-1$.
\item[(iii)] It is called \emph{eutactic} if there
  are positive real numbers $\alpha_v$, with $v \in \vertex \Del f$,
  so that the following conditions hold
\begin{equation*}
\sum_{v \in \vertex \Del f} \alpha_v \ev_v = \ev_{c_{f}} + \frac{\mu(f)}{n}Q_f^{-1}.
\end{equation*}
It is called \emph{semieutactic} if the constants are nonnegative, and \emph{weakly eutactic} if the constants are real, i.e.\ if they exist.
\end{itemize}
\end{definition}

The equation in the definition of eutaxy (iii) has the following geometric interpretation: A negative multiple of the gradient of the function $H$, which is given on the right hand side (see Lemma~\ref{lem:gradient}), lies in the interior of the \emph{inhomogeneous Voronoi cone}
\begin{equation*}
\cv(f) = \cone\{\ev_x : f(x) = 0, x \in \Z^n\}.
\end{equation*}

\subsection{Relation between quadratic forms and functions}
\label{ssec:relation}

Let $Q$ be a PQF and $c \in \R^n$ be a point
attaining the inhomogeneous minimum $\mu(Q)$. Then the closest vectors
$\Min_c Q$ are the vertices of the Delone polytope $\Del f$ of the
quadratic function $f$ given by $Q_f = Q$, $b_f = Q^{-1}c$, $\mu(f)
= \mu(Q)$. Hence, the inhomogeneous minimum of $Q$ is
\begin{equation*}
  \mu(Q) = \max\{\mu(f) : \text{$f$ quadratic function with $Q_f = Q$}\}.
\end{equation*}
A side remark: The convexity of $f \mapsto \mu(f)$ immediately implies the convexity of $Q \mapsto \mu(Q)$, i.e.\ the main result of Delone, Dolbilin, Ryshkov, Shtogrin in \cite{DDRS}, see also \cite[Proposition 7.1]{SV2} or \cite[Proposition 5.1]{Sch}.

We can reformulate the definition of inhomogeneous perfectness and
eutaxy: A PQF $Q$ is inhomogeneous perfect if all quadratic functions
$f$ with $Q_f = Q$ and $\mu(f) = \mu(Q)$ are perfect. A PQF $Q$ is
inhomogeneous eutactic if all quadratic functions $f$ with $Q_f = Q$
and $\mu(f) = \mu(Q)$ are eutactic. With this,
Theorem~\ref{th:principal} follows immediately from the following
theorem.

\begin{theorem}
\label{th:qf_principal}
  A quadratic function lying in the positive Erdahl cone is extreme if and only if it is perfect and
  eutactic.
\end{theorem}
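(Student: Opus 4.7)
I would apply the Karush--Kuhn--Tucker conditions to the smooth scale-invariant function $H$ on the convex domain $\EC_{>0}$. The first step is to observe that $Q_f$ positive definite forces the zero set of $f$ to be a bounded ellipsoid, so $\vertex \Del f$ is finite and every other constraint $f(v) \geq 0$ is strictly satisfied in a neighborhood of $f$. Hence the tangent cone of $\EC_{\geq 0}$ at $f$ is the polar of $\cv(f)$, namely $T_f = \{g : (g, \ev_v) \geq 0 \text{ for all } v \in \vertex \Del f\}$, and $\R f$ lies in its lineality space. Next I would compute $\nabla H(f)$ directly: using $d\mu(g) = -(g, \ev_{c_f})$ (from $\mu(f) = Q_f[c_f] - \alpha_f$ and uniqueness of $c_f$) together with $d\log\det Q_f(g) = (g, Q_f^{-1})$, one obtains
\begin{equation*}
\nabla H(f) = -(\det Q_f)^{-1/n}\bigl(\ev_{c_f} + \tfrac{\mu(f)}{n} Q_f^{-1}\bigr),
\end{equation*}
so the eutaxy identity is precisely the statement that $-\nabla H(f)$ lies in the relative interior of the inhomogeneous Voronoi cone $\cv(f)$.

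\textbf{Sufficiency.} For the direction ``perfect and eutactic $\Rightarrow$ extreme,'' I would write $-\nabla H(f) = (\det Q_f)^{-1/n}\sum_v \alpha_v \ev_v$ with all $\alpha_v > 0$ and estimate
\begin{equation*}
(g, \nabla H(f)) = -(\det Q_f)^{-1/n} \sum_v \alpha_v (g, \ev_v) \leq 0
\end{equation*}
for every $g \in T_f$, with equality iff $(g, \ev_v) = 0$ for all $v \in \vertex \Del f$. By perfectness the $\ev_v$ span a hyperplane, and since $(\ev_v, f) = f(v) = 0$ the orthogonal complement of that hyperplane is exactly $\R f$. So equality only happens in the scaling direction, along which $H$ is constant; in every other direction of $T_f$ the derivative is strictly negative. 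Combining this with smoothness of $H$ and compactness of the unit sphere in $T_f/\R f$ upgrades the first-order decrease to a strict local maximum modulo scaling, which is extremality.

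\textbf{Necessity and main obstacle.} The converse is where the real work lies. A local max at $f$ forces $(g, \nabla H(f)) \leq 0$ for every $g \in T_f$, so by Farkas' lemma $-\nabla H(f)$ lies in $T_f^* = \cv(f)$; this already delivers semieutaxy $\alpha_v \geq 0$, but it remains to upgrade to strict positivity and to perfectness. If $f$ fails to be perfect, I would produce $g_0 \in f^\perp$ orthogonal to every $\ev_v$: then $f + tg_0 \in \EC_{\geq 0}$ for small $|t|$ with the Delone vertex set unchanged, $H$ is smooth along this line, and $dH(g_0) = 0$; a second-order analysis along the line, exploiting the convexity of $\mu$ (Delone--Dolbilin--Ryshkov--Shtogrin) together with the concavity of $(\det Q)^{1/n}$, should give a direction of strict increase unless $g_0 \in \R f$. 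If instead some $\alpha_{v_0} = 0$, I would pick $g$ that strictly opens the constraint at $v_0$ while keeping the remaining $\ev_v$ annihilated; the active vertex set shrinks and a curve fitted to the reduced active set should again realize a strict increase. This second-order/combinatorial step, needed to rule out non-perfect or non-strictly-eutactic extreme forms, is the main technical obstacle: the first-order KKT machinery is insufficient, and one must explicitly exploit how the Delone subdivision degenerates under perturbations orthogonal to $\{\ev_v\}$ or along vanishing-multiplier directions.
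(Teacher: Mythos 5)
Your proposal follows the same overall route as the paper: compute the gradient, observe that eutaxy is equivalent to $-\nabla H(f)$ lying in the relative interior of the inhomogeneous Voronoi cone $\cv(f)$, apply the Karush--Kuhn--Tucker framework (Proposition~\ref{prop:nlp} in the paper) for sufficiency, and use a second-order convexity argument to prevent non-perfect or non-eutactic forms from being local maxima. The sufficiency direction is essentially identical to the paper's.

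In the necessity direction you correctly identify the second-order step as the crux, but you leave it as a plan (``should give a direction of strict increase'') rather than a proof. The precise tool the paper isolates is Lemma~\ref{lem:boundary}: on any segment joining two non-collinear points $f_1,f_2$ of the positive Erdahl cone with $\mu(f_1)=\mu(f_2)$, the Hermite invariant $H$ is \emph{strictly} convex. This comes from the convexity of $\mu$ together with the \emph{strict} convexity of $Q \mapsto (\det Q)^{-1/n}$ (Minkowski), plus the explicit observation that equality in both forces $b_{f_1}=b_{f_2}$ and then $f_1=f_2$. Once you have this lemma, perfectness is immediate: a local maximum of a convex-along-lines function must sit on an extreme ray of the cone, and extreme rays of the Erdahl cone are exactly the perfect functions. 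You gesture at this via ``convexity of $\mu$ plus concavity of $(\det Q)^{1/n}$'' but don't close the strictness; that strictness is exactly what rules out $g_0 \notin \R f$.

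The weaker part of your plan is the treatment of a vanishing multiplier. You propose to pick a direction $g$ with $(g,\ev_{v_0})>0$ and $(g,\ev_v)=0$ for $v\ne v_0$, but such a $g$ need not exist once perfectness holds (the $\ev_v$ are linearly dependent in the hyperplane $f^\perp$), and the ``curve fitted to the reduced active set'' is not defined. The paper instead applies Farkas' lemma directly to the non-interiority of $-\nabla H(f_0)$ in $\cv(f_0)$: it produces $h\in f_0^\perp$ with $(\ev_v,h)\ge 0$ for \emph{all} $v$ and $(\nabla H(f_0),h)=0$, then perturbs along $h+\alpha f_0$ with $\alpha$ tuned so that $\tfrac{d}{d\lambda}\mu(f_0+\lambda(h+\alpha f_0))\big|_0 = 0$. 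Convexity of $\mu$ gives $\varphi_\alpha(\lambda)\ge\varphi_\alpha(0)$, strict convexity of the determinant factor gives a strictly increasing lower bound for $\psi_\alpha(\lambda)$, and the product $H$ strictly increases for $\lambda>0$, contradicting extremality. You should replace the ``open a constraint / shrink the active set'' step with this Farkas-plus-recalibration argument; it handles every boundary situation uniformly and is where the genuine content of the necessity proof lies. Also note that the obstacle is analytic (strict convexity) rather than combinatorial (degeneration of the Delone subdivision), as you speculated at the end.
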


\subsection{Relation to lattices}

It is well-known that there is a one-to-one correspondence between notions for PQFs (up to unimodular transformations) and notions of lattices (up to orthogonal transformations) which we briefly summarize in the following table:

\medskip

\begin{tabular}{l|l}
{\bf PQF} & {\bf lattice}\\
\hline
determinant & volume of fundamental domain\\
homogeneous minimum & packing radius\\
Hermite invariant & packing density\\
inhomogeneous minimum & covering radius\\
inhomogenous Hermite invariant & covering density
\end{tabular}

\medskip

The relation between quadratic functions and lattices is not that close. Although we use quadratic functions to describe individual Delone polytopes (and so individual vertices of the Voronoi cell of a lattice), some quadratic functions correspond to Delone polytopes, others do not.

\section{Proof of Theorem~\ref{th:qf_principal}}
\label{sec:proof}

The proof of our principal theorem is an analysis of local maxima of a
differentiable function satisfying inequality constraints. We first
recall some background from nonlinear optimization: sufficient and
necessary criteria for a function to have a local maximum. Then we
specialize this to our situation of the Hermite invariant of a quadratic function.

\subsection{Nonlinear optimization}

We just state the result and refer to any book on nonlinear
optimization for more details, e.g.\ the book by Boyd and Vandenberghe \cite[Chapter 5]{BV}.  

Let $E$ be a Euclidean space with inner product $x \cdot y$ and let $p: E \to \R$ and $q_1, \ldots, q_k : E \to \R$ be differentiable functions.  Assume, we want to determine whether or not $p$ has a local maximum $x_0$ on the boundary of the set \begin{equation*}
  G = \{ x \in E : q_i(x) \geq 0 \mbox{ for } i=1,\ldots,k \}.
\end{equation*}
In a sufficiently small neighborhood of $x_0$, the functions $p$ and
$q_i$ can be linearized and approximated by affine functions:
\begin{equation*}
x \mapsto p(x_0) + (\grad p)(x_0) \cdot (x-x_0).
\end{equation*}
We define the \emph{normal cone} of $G$ at $x_0$ by
\begin{equation*}
N(x_0)= \cone \{-(\grad q_i)(x_0) : i=1,\ldots,k \}.
\end{equation*}

\begin{proposition}
\label{prop:nlp}
  Suppose $x_0$ satisfies $(\grad p)(x_0) \not= 0$ and $q_i(x_0)=0$,
  as well as $(\grad q_i)(x_0)\not= 0$, for $i=1,\dots, k$.
\begin{itemize}
\item[(i)] The function $p$ attains an isolated local maximum on $G$ at
$x_0$, if
\begin{equation*} 
(\grad p)(x_0) \in \interior N(x_0),
\end{equation*}
where $\interior N(x_0)$ is the \emph{interior} of the normal cone.
\item[(ii)] The function $p$ does not attain a local maximum on $G$ at $x_0$, if
\begin{equation*} 
(\grad p)(x_0) \not\in N(x_0).
\end{equation*}
\end{itemize}
\end{proposition}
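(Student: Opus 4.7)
The plan is a first-order analysis at $x_0$: combine Taylor expansion of $p$ and of the active constraints $q_i$ with cone duality between the normal cone $N(x_0)$ and the linearized feasible directions. Throughout I would use $q_i(x) = (\grad q_i)(x_0) \cdot (x - x_0) + o(|x - x_0|)$ (since $q_i(x_0) = 0$) and the analogous expansion for $p$.

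For part (i), the hypothesis $(\grad p)(x_0) \in \interior N(x_0)$ means that some $\epsilon$-ball around $(\grad p)(x_0)$ sits inside $N(x_0)$. The key trick is that for any unit vector $e$ one can write
\begin{equation*}
(\grad p)(x_0) + \epsilon e = \sum_{i=1}^k \lambda_i(e)\,\bigl(-(\grad q_i)(x_0)\bigr)
\end{equation*}
with coefficients $\lambda_i(e) \geq 0$. For feasible $x \in G$ near $x_0$ I would choose $e = (x - x_0)/|x - x_0|$, take the inner product with $x - x_0$, and use the first-order expansion of each $q_i$ to rewrite the right-hand side as $-\sum_i \lambda_i(e)\,q_i(x) + o(|x - x_0|)$. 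Since $q_i(x) \geq 0$ on $G$ and the $\lambda_i(e)$ are nonnegative, this yields
\begin{equation*}
p(x) - p(x_0) \leq -\epsilon\,|x - x_0| + o(|x - x_0|),
\end{equation*}
which is strictly negative for $x$ close enough to $x_0$, establishing the isolated local maximum.

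For part (ii), the normal cone $N(x_0)$ is closed as a finitely generated convex cone, so $(\grad p)(x_0) \notin N(x_0)$ lets the separating hyperplane theorem supply a direction $d$ with $(\grad p)(x_0) \cdot d > 0$ and $n \cdot d \leq 0$ for all $n \in N(x_0)$, equivalently $(\grad q_i)(x_0) \cdot d \geq 0$ for every $i$. In the nondegenerate case where every inequality is strict, the ray $x_0 + t d$ lies in $G$ for small $t > 0$ and the Taylor expansion of $p$ gives $p(x_0 + t d) > p(x_0)$, so $x_0$ is not a local maximum. The step I expect to be the main obstacle is the degenerate case where $(\grad q_i)(x_0) \cdot d = 0$ for some active $i$, because the sign of $q_i$ along the straight ray is then determined by higher-order terms and feasibility may fail. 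My plan here is to perturb $d$ slightly, using that each active gradient is nonzero, to a nearby direction $d'$ that still satisfies $(\grad p)(x_0) \cdot d' > 0$ but has $(\grad q_i)(x_0) \cdot d' > 0$ strictly for all $i$, and then invoke the nondegenerate argument for $d'$.
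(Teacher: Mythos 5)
The paper provides no proof of this proposition; it is stated with a reference to a textbook on nonlinear optimization, so the only comparison to make is with the correctness of the standard argument. Your treatment of part (i) is essentially right, but there is a technicality you should address: when you absorb the linearization errors of all the $q_i$ into a single $o(|x-x_0|)$, you implicitly assume the coefficients $\lambda_i(e)$ stay bounded uniformly as $e$ ranges over the unit sphere, which does not come for free when the gradients $-(\grad q_i)(x_0)$ are linearly dependent. It is fixable by Carath\'eodory's theorem for cones, or---more cleanly---one can bypass the decomposition by noting that $(\grad p)(x_0)\in\interior N(x_0)$ is equivalent, by cone duality, to $(\grad p)(x_0)\cdot d<0$ for every nonzero $d$ in the linearized tangent cone $T=\{d:(\grad q_i)(x_0)\cdot d\geq 0,\ i=1,\dots,k\}$, and then the uniform negative slope comes from compactness of the unit sphere in $T$.

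For part (ii) there is a genuine gap, exactly at the spot you flagged. The perturbation you propose is not always available, and the proposition as literally stated is in fact false without a constraint qualification. Take $E=\R^2$, $q_1(x)=x_1-x_2^2$, $q_2(x)=-x_1-x_2^2$, $p(x)=x_2$, $x_0=0$: all hypotheses hold, $N(x_0)=\cone\{(1,0),(-1,0)\}$ is the $x_1$-axis, and $(\grad p)(x_0)=(0,1)\notin N(x_0)$; yet $G=\{x: x_2^2\leq x_1\leq -x_2^2\}=\{0\}$, so $x_0$ is a trivial local maximum of $p$ on $G$. There is no $d'$ with $(\grad q_1)(x_0)\cdot d'>0$ and $(\grad q_2)(x_0)\cdot d'>0$, so your perturbation cannot be carried out. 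What is missing is a constraint qualification such as Mangasarian--Fromovitz (existence of $d$ with $(\grad q_i)(x_0)\cdot d>0$ for all $i$), which is precisely what would make your perturbation go through; alternatively, when the active $q_i$ are affine---which is the situation in the paper's application, where the constraints are $(\ev_v,f)\geq 0$---the straight ray along the separating direction stays feasible and no perturbation is needed. You should make that hypothesis explicit rather than rely on the perturbation always being possible.
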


\subsection{Proof of Theorem~\ref{th:qf_principal}}

First we compute the gradient of the Hermite invariant:

\begin{lemma}
\label{lem:gradient}
The Taylor series of the Hermite invariant $H$ at the quadratic function $f_0$ lying in the positive Erdahl cone is
\begin{equation*}
  \frac{1}{(\det Q_{f_0})^{1/n}}\left(\mu(f_0) 
  - \left(\ev_{c_{f_0}} + \frac{\mu(f_0)}{n} Q_{f_0}^{-1}, f-f_0\right) + \text{h.o.t.}\right),
\end{equation*}
where $\text{h.o.t.}$ stands for higher order terms.
\end{lemma}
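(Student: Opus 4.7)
The plan is to expand $H(f)=\mu(f)\,(\det Q_f)^{-1/n}$ factor by factor at $f_0$ and then multiply the two first-order expansions by the product rule.

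First I would linearize $\mu$. Since $f_0$ lies in the positive Erdahl cone, $Q_{f_0}$ is positive definite, so the unique minimizer of $f_0$ is the smooth function $c_f=-Q_f^{-1}b_f$ of $f$, and $\mu(f)=-f(c_f)$. Writing $f=f_0+g$ and differentiating at $t=0$ along the one-parameter family $f_0+tg$, the chain rule produces a term $-(\nabla f_0)(c_{f_0})\cdot \tfrac{dc}{dt}|_{t=0}$ which vanishes because $c_{f_0}$ is a critical point of $f_0$. This is the envelope-theorem argument and is the only subtle step; it gives
\begin{equation*}
\mu(f_0+g)=\mu(f_0)-g(c_{f_0})+O(\|g\|^2)=\mu(f_0)-(\ev_{c_{f_0}},g)+O(\|g\|^2),
\end{equation*}
since $(\ev_x,f)=f(x)$ by definition.

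Second, I would use the standard matrix-calculus identity $\tfrac{d}{dt}\log\det(Q_{f_0}+tQ_g)|_{t=0}=\langle Q_{f_0}^{-1},Q_g\rangle$ to get
\begin{equation*}
(\det Q_{f_0+g})^{-1/n}=(\det Q_{f_0})^{-1/n}\Bigl(1-\tfrac{1}{n}\langle Q_{f_0}^{-1},Q_g\rangle+O(\|g\|^2)\Bigr).
\end{equation*}
Viewing the quadratic form $Q_{f_0}^{-1}$ as a quadratic function with vanishing constant and linear parts, the pairing $\langle Q_{f_0}^{-1},Q_g\rangle$ equals $(Q_{f_0}^{-1},g)$ with respect to the inner product on quadratic functions introduced in the paper.

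Finally, multiplying the two expansions and collecting the zeroth- and first-order contributions in $g$, the first-order coefficient becomes
\begin{equation*}
-(\det Q_{f_0})^{-1/n}\Bigl(\ev_{c_{f_0}}+\tfrac{\mu(f_0)}{n}Q_{f_0}^{-1},\,g\Bigr),
\end{equation*}
which upon factoring out $(\det Q_{f_0})^{-1/n}$ is exactly the formula claimed. As a sanity check, one may instead differentiate the closed form $\mu(f)=b_f^{\,t}Q_f^{-1}b_f-\alpha_f$ directly: using the identity $c_{f_0}=-Q_{f_0}^{-1}b_{f_0}$ and the definition $\ev_x(y)=(1+x\cdot y)^2$, the direct computation recovers the same linear functional $-(\ev_{c_{f_0}},\cdot)$ and so the two derivations agree.
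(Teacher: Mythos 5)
Your proof is correct and follows essentially the same route as the paper's, which simply asserts the first-order expansion $\mu(f)=\mu(f_0)-(\ev_{c_{f_0}},f-f_0)+\text{h.o.t.}$ and the identity $(\grad\det)(Q)=(\det Q)Q^{-1}$ and leaves the product rule implicit. You have filled in exactly the details the paper suppresses: the envelope-theorem argument showing the term from the variation of the minimizer $c_f$ drops out (because $\nabla f_0(c_{f_0})=0$), the logarithmic-derivative step for $(\det Q_f)^{-1/n}$, and the observation that pairing with $Q_{f_0}^{-1}$ viewed as a quadratic function with vanishing affine part recovers $\langle Q_{f_0}^{-1},Q_g\rangle$ under the given inner product — which is the point making the final formula type-check. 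The alternative direct differentiation of $\mu(f)=b_f^tQ_f^{-1}b_f-\alpha_f$ you mention as a sanity check is a legitimate way to bypass the envelope argument and reach the same conclusion.
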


\begin{proof}
The Taylor series of the functional $\mu$ at $f_0$ is
\begin{equation*}
\mu(f_0) - (\ev_{c_{f_0}},f - f_0) + \text{h.o.t.},
\end{equation*}
and the gradient of the determinant is $(\grad \det)(Q) = (\det Q) Q^{-1}$.
\end{proof}

We need the following convexity result. It implies that local maxima of
the Hermite invariant can only be attained at the extreme rays of the
positive Erdahl cone. This and the existence of these local maxima, which we will establish in the next section, shows that the
interior of the Erdahl cone is \emph{not} equal to the positive Erdahl cone; although it is of course contained in it.

\begin{lemma}
\label{lem:boundary}
  Let $f_1$ and $f_2$ be two quadratic functions in the positive Erdahl cone having positive Hermite invariants. Then, the maximum of the Hermite
  invariant $H$ on $\cone\{f_1, f_2\}$ is only attained at its extreme
  rays $\cone\{f_1\}$ or $\cone\{f_2\}$.
\end{lemma}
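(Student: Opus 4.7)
The plan is to establish that $H$ is strictly quasiconvex along line segments with endpoints on the two extreme rays of $\cone\{f_1, f_2\}$. The two ingredients are: (a) $\mu$ is convex on the space of quadratic functions (noted earlier in the excerpt), and (b) $Q \mapsto (\det Q)^{1/n}$ is concave on the open positive definite cone by Minkowski's determinant inequality, strict unless the two matrices are proportional. Since $H$ is scale-invariant, every ray in the relative interior of $\cone\{f_1, f_2\}$ is represented by a unique $g_t := (1-t) f_1 + t f_2$ with $t \in (0,1)$. Setting $M := \max(H(f_1), H(f_2)) > 0$, it suffices to show $H(g_t) < M$ on the open interval.

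Chaining the two estimates, and using $\mu(f_i) \leq M (\det Q_{f_i})^{1/n}$ by definition of $M$ together with $M > 0$, one obtains
\begin{equation*}
\mu(g_t) \;\leq\; (1-t)\mu(f_1) + t\mu(f_2) \;\leq\; M\bigl[(1-t)(\det Q_{f_1})^{1/n} + t(\det Q_{f_2})^{1/n}\bigr] \;\leq\; M (\det Q_{g_t})^{1/n},
\end{equation*}
hence $H(g_t) \leq M$ throughout $[0,1]$.

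To upgrade the inequality to a strict one for $t \in (0,1)$, I would split on whether the quadratic parts are proportional. If $Q_{f_1}$ and $Q_{f_2}$ are not proportional, then the last inequality is strict by the strict form of Minkowski. Otherwise, scale invariance of $H$ (and of the cone generated by $f_1, f_2$) lets me replace $f_2$ by a positive multiple so that $Q_{f_1} = Q_{f_2} =: Q$; then using the identity $\mu(f) = b_f^t Q^{-1} b_f - \alpha_f$, a direct expansion gives
\begin{equation*}
(1-t)\mu(f_1) + t\mu(f_2) - \mu(g_t) \;=\; t(1-t)\,(b_{f_1} - b_{f_2})^t Q^{-1} (b_{f_1} - b_{f_2}),
\end{equation*}
which is strictly positive on $(0,1)$ whenever $b_{f_1} \neq b_{f_2}$, making the first inequality in the chain strict. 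In the remaining degenerate case $Q_{f_1} = Q_{f_2}$ and $b_{f_1} = b_{f_2}$, the difference $f_1 - f_2$ must be a nonzero constant (otherwise the two rays coincide), so $\mu(f_1) \neq \mu(f_2)$, whence $H(f_1) \neq H(f_2)$ and the middle inequality in the chain is strict. The main obstacle is really this small case bookkeeping to rule out equality in the interior of the segment; the convexity/concavity framework itself is standard.
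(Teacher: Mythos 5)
Your proof is correct. You establish the lemma by a chain of inequalities against $M = \max(H(f_1),H(f_2))$, whereas the paper first uses scale-invariance to normalize so that $\mu(f_1)=\mu(f_2)$ and then proves that $H$ is \emph{strictly convex} along that normalized segment. Both arguments rest on the same three ingredients: convexity of $\mu$, Minkowski's determinant inequality (for you, concavity of $Q\mapsto(\det Q)^{1/n}$; for the paper, strict convexity of $Q\mapsto(\det Q)^{-1/n}$), and the identity $\mu(f)=b_f^tQ_f^{-1}b_f-\alpha_f$ used to analyze the degenerate direction. The organizational difference is that your version needs an extra case because Minkowski's inequality is an equality for all \emph{proportional} matrices, which you dispose of by rescaling $f_2$; the paper's $\mu$-normalization plus strict convexity of $(\det)^{-1/n}$ makes the proportional case collapse into the equal case automatically. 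One substantive point to be aware of: the paper's route yields the strictly stronger statement that $H$ is strictly convex on the $\mu$-normalized segment, and this extra strength is exactly what is invoked later in the proof of Theorem~\ref{th:finiteness} (to see that $H$ has at most one critical point on $\Delta(D)$, necessarily a minimum). Your argument shows only that the maximum is not attained in the relative interior, which proves the lemma as stated but would need to be supplemented to support that later application.
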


\begin{proof}
  We may assume that $f_1$ and $f_2$ are not collinear. Since $H$ is scale-invariant for positive scalars we may assume that $\mu(f_1) = \mu(f_2)$. It is sufficient to prove that
\begin{equation}
\label{eq:H_convex}
H(t f_1 + (1-t)f_2) < t H(f_1) + (1-t) H(f_2)
\end{equation}
holds for all $0 < t < 1$. The convexity of the function $\mu$ and the
convexity of the function $Q \mapsto (\det Q)^{-1/n}$, immediately
give the inequality~\eqref{eq:H_convex}, but only with ``$\leq$''
instead of ``$<$''.

Since the function $Q \mapsto (\det Q)^{-1/n}$ is
strictly convex (originally due to Minkowski \cite[\S 8]{Min}) we have equality in \eqref{eq:H_convex} if and only if
both functions
\begin{equation*}
  t \mapsto \mu(t f_1 + (1-t)f_2), \text{ and} \quad t \mapsto Q_{t f_1 +
    (1-t)f_2}
\end{equation*}
are constant for $0 \leq t \leq 1$. Suppose this is the case, then
\begin{equation*}
0 = \mu(t f_1 + (1-t)f_2) - t \mu(f_1) - (1-t) \mu(f_2) = -t(1-t)Q_{f_1}[b_{f_1} - b_{f_2}] ,
\end{equation*}
and hence $b_{f_1} = b_{f_2}$. From this it follows that $\alpha_{f_1} = \alpha_{f_2}$, and hence $f_1$, $t f_1 + (1-t)f_2$ and
$f_2$ all coincide which contradicts the assumption.
\end{proof}

Note that the lemma and its proof show that the function $H$ is strictly convex on the line segment connecting $f_1$ and $f_2$ if $\mu(f_1) = \mu(f_2)$ and if $\mu$ is positive on the line segment.

Now we can finish the proof.

\begin{proof}[Proof of Theorem~\ref{th:qf_principal}]
  Suppose that $f_0$ is perfect and eutactic.  Since the Hermite
  function is invariant with respect to positive scaling, we can work
  with the Erdahl cone intersected with the affine
  hyperplane~$H_{f_0}$ orthogonal to~$f_0$ and containing~$f_0$.
  Consider the set
\begin{equation*}
  G_{f_0} = \{f \in \EC_{>0} \cap H_{f_0} : (\ev_v, f) \geq 0, v \in \vertex \Del f_0\}.
\end{equation*}
Since $f_0$ is perfect, the functions $\ev_v$, with $v \in \vertex
\Del f_0$, span a subspace of codimension~$1$ in the
$\binom{n+2}{2}$-dimensional space of quadratic functions. Hence, for
a sufficiently small neighborhood $N_{f_0}$ of the point $f_0$ we have
\begin{equation*}
N_{f_0} \cap G_{f_0} = N_{f_0} \cap (\EC_{>0} \cap H_{f_0}).
\end{equation*}
Since $f_0$ is eutactic and because of the gradient computation in
Lemma~\ref{lem:gradient} we have that $-(\grad H)(f_0)$ lies in the interior of the inhomogeneous Voronoi cone $\cv(f_0)$. Here we take the interior within the affine hyperplane~$H_{f_0}$. Applying Proposition~\ref{prop:nlp}~(i) shows that $f_0$ is a local maximum of~$H$.

\medskip

Conversely, suppose that $f_0$ is extreme. Then by
Lemma~\ref{lem:boundary} we know that $f_0$ has to lie on an extreme
ray of the Erdahl cone, hence it is perfect.  Suppose that $f_0$ is
not eutactic. Proposition~\ref{prop:nlp}~(ii) shows that the only
situation which can occur is that $-(\grad H)(f_0)$ lies on the boundary of the inhomogeneous Voronoi cone $\cv(f_0)$.
Then, by Farkas' lemma (see e.g.\ Schrijver~\cite[Chapter 7.3]{Schrijver}), there exists a quadratic function $h$ in the affine
hyperplane~$H_{f_0}$ orthogonal to~$f_0$ and containing~$f_0$ so that
\begin{equation*}
\left\lbrace
\begin{array}{l}
(\ev_v, h) \geq 0, \quad \text{for all $v \in \vertex \Del f_0$,}\\
((\grad H)(f_0), h) = 0.
\end{array}
\right.
\end{equation*}
For $\lambda \geq 0$, consider the univariate function
\begin{equation*}
\varphi_{\alpha}(\lambda) = \mu(f_0 + \lambda(h + \alpha f_0)).
\end{equation*}
We can choose $\alpha$ so that
\begin{equation*}
0 = \frac{\partial\varphi_{\alpha}}{\partial \lambda}(0) = 
((\grad \mu)(f_0), h + \alpha f_0),
\end{equation*}
because $((\grad \mu)(f_0), f_0) = \mu(f_0) \neq 0$. Since
$\varphi_{\alpha}$ is convex and because
$\frac{\partial\varphi_{\alpha}}{\partial \lambda}(0) = 0$, we have
\begin{equation*}
\varphi_{\alpha}(\lambda) \geq \varphi_{\alpha}(0).
\end{equation*}
For $\lambda \geq 0$ consider the univariate function
\begin{equation*}
  \psi_{\alpha}(\lambda) = \det(Q_{f_0} + \lambda(Q_{h} + \alpha Q_{f_0}))^{-1/n}.
\end{equation*}
Since $\psi_{\alpha}$ is strictly convex, we have for $\lambda > 0$
\begin{equation*}
  \psi_{\alpha}(\lambda) > \psi_{\alpha}(0) + \frac{\partial \psi_{\alpha}}{\partial \lambda}(0)\lambda.
\end{equation*}
Taking the product shows
\begin{equation*}
H(f_0 + \lambda(h + \alpha f_0)) = \varphi_{\alpha}(\lambda) \psi_{\alpha}(\lambda) >
\varphi_{\alpha}(0) \psi_{\alpha}(0) = H(f_0),
\end{equation*}
because $\frac{\partial\psi_{\alpha}}{\partial \lambda}(0) \geq
0$. Hence, $f_0$ is not extreme.
\end{proof}

\section{Examples --- Strongly inhomogeneous perfect forms}
\label{sec:examples}

Venkov introduced strongly perfect forms in \cite
{Venkov}. \emph{Strongly perfect forms} are PQFs in which the shortest
vectors carry a spherical $4$-design.

\begin{theorem}[Venkov \cite{Venkov}]
\label{th:venkov}
Strongly perfect forms are extreme.
\end{theorem}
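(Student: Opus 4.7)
The plan is to apply Voronoi's criterion (Theorem~\ref{th:Voronoi}) and extract perfectness and eutaxy independently, each from a single application of the spherical design identity. First I would fix a factorization $A^t A = Q$ so that $X = A \cdot \Min Q$ is the shortest-vector configuration of the lattice $A\Z^n$ sitting on a genuine Euclidean sphere $S_r$ of radius $r = \sqrt{\lambda(Q)}$; by hypothesis $X$ is then a spherical $4$-design, and in particular also a spherical $2$-design.

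For eutaxy I would apply the $2$-design identity to the matrix-valued integrand $x \mapsto xx^t$. By $\mathrm{O}(n)$-invariance the integral $\int_{S_r} xx^t\, d\sigma$ is a scalar multiple of the identity, with the scalar pinned down by taking the trace, so the design identity gives $\sum_{x \in X} xx^t = \frac{|X|\,r^2}{n}\, I_n$ up to a routine constant. Conjugating by $A^{-1}$ translates this directly into $\sum_{v \in \Min Q} vv^t = \frac{|X|\,\lambda(Q)}{n}\, Q^{-1}$, which is exactly the eutaxy relation with a single positive weight $\alpha_v$ shared across all $v \in \Min Q$.

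For perfectness I would argue by contradiction. If $\{vv^t : v \in \Min Q\}$ failed to span the $\binom{n+1}{2}$-dimensional space of symmetric matrices, there would be a nonzero symmetric $M$ orthogonal to every $vv^t$, equivalently $v^t M v = 0$ for every $v \in \Min Q$. Transporting this through $A$ produces a nonzero symmetric $M'$ with $x^t M' x = 0$ on all of $X$. Now comes the crucial move: apply the $4$-design identity to the degree-four polynomial $p(x) = (x^t M' x)^2$. Its sum over $X$ vanishes, hence so does $\int_{S_r} p\, d\sigma$; but that integral is strictly positive whenever $M' \neq 0$, giving the contradiction. Thus $Q$ is perfect, and Theorem~\ref{th:Voronoi} concludes that $Q$ is extreme.

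The only real obstacle is the bookkeeping needed to line up the lattice-side picture (shortest integral vectors of $Q$, phrased in terms of $\Z^n$) with the geometric-side picture where spherical designs actually live (a point configuration on a Euclidean sphere). Once the Cholesky factor $A$ provides that identification, the mechanism is uniform: the $2$-design property is exactly what is needed to manufacture eutaxy, and the $4$-design property is exactly strong enough to upgrade vanishing of a quadratic form on $X$ (a quadratic condition) to vanishing of its square (a quartic condition), which is precisely the perfectness criterion.
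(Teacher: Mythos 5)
Your proof is correct and follows the same strategy the paper attributes to Venkov (and which the paper itself carries out for the inhomogeneous analogue, Theorem~\ref{th:qf_strongly}): the $2$-design identity applied to $x \mapsto xx^t$ gives eutaxy with equal weights, and the $4$-design identity applied to $(x^tM'x)^2$ forces any quadratic form vanishing on the minimal vectors to vanish on the whole sphere and hence to be zero, which is perfectness. This is essentially the paper's approach; the only cosmetic difference is that you transport the design to a Euclidean sphere via a Cholesky factor, whereas the paper works with designs on $S_Q(c,r)$ directly.
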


The notion of spherical designs is due to Delsarte, Goethals,
Seidel~\cite{DGS}.  Generally, finitely many points $X$ in~$\R^n$
carry a \emph{spherical $t$-design} (with respect to a PQF $Q$) if
they lie on a sphere
\begin{equation*}
  S_Q(c,r) = \{x \in \R^n : Q[x-c] = r^2\}, \quad \text{with $c \in \R^n$, and $r \in \R$},
\end{equation*}
and so that for all polynomials $f$ up to degree $t$ we have
\begin{equation*}
  \frac{1}{|X|} \sum_{x \in X} f(x) = \int_{S_Q(c,r)} f(x) d\omega(x),
\end{equation*}
where $\omega$ is the normalized surface measure on $S_Q(c,r)$.  The
maximal $t$ for which $X$ carries a spherical $t$-design is called its
\emph{strength} which we denote by~$s(X)$. An equivalent, alternative characterization of
spherical $t$-designs is the following: The points $X$ carry a
spherical $t$-design (with respect to a PQF $Q$) if there exists $c
\in \R^n$ and $r \in \R$ so that the following equalities hold for all
$k \leq t$ and all $y \in \R^n$:
\begin{equation*}
\sum_{x \in X} \left\langle Q, (x-c)(y-c)^t \right\rangle^k =
\left\{
\begin{array}{l}
0, \quad \text{for all odd $k$,}\\
\frac{1\cdot3\cdots(k-1)}{n(n+2)\cdots(n+k-2)} |X| r^{k/2} Q[y-c]^{k/2},\\
\qquad \text{for all even $k$.}
\end{array}
\right.
\end{equation*}

For the proof of Theorem~\ref{th:venkov} Venkov used Voronoi's
characterization of extreme PQFs in Theorem~\ref{th:Voronoi}. He
shows that having a spherical $2$-design already implies eutaxy, and
having a spherical $4$-design implies perfectness.

Theorem~\ref{th:venkov} gives a uniform way for showing that many remarkable PQFs are extreme. It applies e.g.\ to the forms of the root lattices $\mathsf{D}_4$, $\mathsf{E}_6$, $\mathsf{E}_7$, $\mathsf{E}_8$, the Coxeter-Todd lattice $\mathsf{K}_{12}$, the Barnes-Wall lattices $\mathsf{BW}_{2^d}$, with $d \geq 3$, the laminated lattice $\Lambda_{23}$, the shorter Leech lattice $\mathsf{O}_{23}$, the Leech lattice $\Lambda_{24}$, the Thompson-Smith lattice $\Lambda_{248}$.  All but the last case are treated in Venkov \cite{Venkov}. The result that the Barnes-Wall lattices are strongly perfect is due to Nottebaum \cite{Nottebaum}. For the Thompson-Smith lattice see Lempken, Schr\"oder, Tiep \cite{LST}. In the last two cases it is interesting to note that one can show the strong perfectness of $\mathsf{BW}_{2^d}$ and $\Lambda_{248}$ \emph{without} having the list of all minimal vectors (in fact at the time of writing not even the inhomogeneous minimum is known) but using properties of the automorphism group of $\mathsf{BW}_{2^d}$ and $\Lambda_{248}$ only.

Now we adapt the concept of strong perfection to the inhomogeneous
case.

\begin{definition}
  Let $Q$ be a PQF. It is called \emph{strongly inhomogeneous
    perfect}, if for each $c \in \R^n$ attaining $\mu(Q)$, the closest
  vectors $\Min_c Q$ carry a spherical $4$-design.
\end{definition}

\begin{theorem}
\label{th:strongly}
Inhomogeneous strongly perfect forms are inhomogeneous extreme.
\end{theorem}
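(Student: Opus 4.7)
The plan is to apply Theorem~\ref{th:principal} by separately verifying its two conclusions: the spherical $2$-design structure of $\Min_c Q$ will deliver inhomogeneous eutaxy, and the stronger $4$-design property will deliver inhomogeneous perfectness. This mirrors Venkov's strategy for Theorem~\ref{th:venkov}, transplanted into the inhomogeneous framework.

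Fix a center $c \in \R^n$ attaining $\mu(Q)$, set $X = \Min_c Q$, and write $r^2 = \mu(Q)$, so that $X \subset S_Q(c,r)$. For eutaxy I would try the uniform ansatz $\alpha_v = 1/|X|$. Splitting the required identity
\begin{equation*}
\begin{pmatrix} 1 & c^t \\ c & cc^t + \frac{\mu(Q)}{n}Q^{-1} \end{pmatrix} = \frac{1}{|X|}\sum_{v \in X}\begin{pmatrix}1\\v\end{pmatrix}\begin{pmatrix}1\\v\end{pmatrix}^t
\end{equation*}
into its three blocks, the top-left block is trivial, the top-right block is the $1$-design condition that the centroid of $X$ equals $c$, and the bottom-right block reduces, after expanding $vv^t = (v-c)(v-c)^t + vc^t + cv^t - cc^t$ and using $\sum_v v = |X| c$, to
\begin{equation*}
\frac{1}{|X|}\sum_{v \in X}(v-c)(v-c)^t = \frac{r^2}{n}Q^{-1}.
\end{equation*}
This is precisely the $k=2$ case of the $t$-design moment identity with test direction $y = c + Q^{-1}w$. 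Positivity of $1/|X|$ then yields inhomogeneous eutaxy.

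For perfectness I would work in the quadratic-function formulation of Theorem~\ref{th:qf_principal}. Put $f_0(x) = Q[x-c] - r^2$, so that $\vertex \Del f_0 = X$, and let $g$ be any quadratic function with $g(v) = (\ev_v, g) = 0$ for every $v \in X$. It suffices to show that $g$ is a scalar multiple of $f_0$, for then $\{\ev_v : v \in X\}$ spans the codimension-$1$ hyperplane $f_0^{\perp}$ in the $\binom{n+2}{2}$-dimensional space of quadratic functions. Now $g^2$ has degree $4$, so the $4$-design property gives
\begin{equation*}
\int_{S_Q(c,r)} g(x)^2\, d\omega(x) = \frac{1}{|X|}\sum_{v \in X} g(v)^2 = 0.
\end{equation*}
Since $g^2$ is continuous and nonnegative, $g$ vanishes identically on the ellipsoid $S_Q(c,r)$. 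Because $Q$ is positive definite, this is a smooth real quadric whose vanishing ideal is generated by $f_0$; in particular the degree-$\leq 2$ part of that ideal is $\R \cdot f_0$, so $g = \lambda f_0$ for some $\lambda \in \R$.

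The main obstacle is cosmetic rather than conceptual: one must carefully translate the $t$-design moment identity, stated in the $Q$-inner-product geometry of the ellipsoid, into the plain matrix identities demanded by the definitions of inhomogeneous eutaxy and perfectness. Once this dictionary is in place both implications are direct, and Theorem~\ref{th:principal} finishes the argument.
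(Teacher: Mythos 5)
Your argument is correct and follows essentially the same route as the paper's proof of Theorem~\ref{th:qf_strongly}: the uniform weights $\alpha_v = 1/|X|$ together with the $1$- and $2$-design moment identities give eutaxy, and the $4$-design property forces $\int_{S_Q(c,r)} g^2\,d\omega = 0$ for any $g$ vanishing on $X$, so $g$ must be proportional to $f_0$ and perfectness holds. The only cosmetic difference is that you verify eutaxy in the matrix (PQF) formulation and perfectness in the quadratic-function formulation, whereas the paper carries both steps out uniformly in the quadratic-function language and then invokes the translation back to PQFs.
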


We also adapt the definitions to the setting of quadratic functions.

\begin{definition}
  Let $f$ be a quadratic function lying in the positive Erdahl
  cone. It is called \emph{strongly perfect}, if the vertices of its
  Delone polytope carry a spherical $4$-design.
\end{definition}

\begin{theorem}
\label{th:qf_strongly}
Strongly perfect quadratic functions are extreme.
\end{theorem}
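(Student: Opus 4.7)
The plan is to invoke Theorem~\ref{th:qf_principal} and thereby reduce the claim to showing that a strongly perfect quadratic function $f$ is both perfect and eutactic. This mirrors Venkov's strategy in the proof of Theorem~\ref{th:venkov}: already the spherical $2$-design property implicit in strong perfectness forces eutaxy, while the full $4$-design hypothesis forces perfectness. Throughout I write $X = \vertex \Del f$, $c = c_f$, $Q = Q_f$, $\mu = \mu(f)$, and $w_v = v - c$ for $v \in X$. By hypothesis $X$ lies on the sphere $S_Q(c,\sqrt{\mu})$ and carries a spherical $4$-design with respect to $Q$.

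For eutaxy I would try the uniform weights $\alpha_v = 1/|X|$. Decomposing the identity
\[ \sum_{v \in X} \alpha_v \ev_v = \ev_c + \frac{\mu}{n} Q^{-1} \]
into its constant, linear, and quadratic components yields $\sum_v \alpha_v = 1$, $\sum_v \alpha_v v = c$, and $\sum_v \alpha_v v v^t = c c^t + \frac{\mu}{n} Q^{-1}$. With $\alpha_v = 1/|X|$ the first is trivial; after centring at $c$ and using the $k=1$ moment $\sum_v w_v = 0$ the second holds; and the third reduces to $\sum_v w_v w_v^t = \frac{|X|\mu}{n} Q^{-1}$, which is exactly the $k=2$ moment identity of the design (the scalar $\frac{|X|\mu}{n}$ is pinned down by taking a trace against $Q$, which evaluates to $|X|\mu$). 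All weights are positive, so $f$ is eutactic in the sense of Definition~\ref{def:perfect_eutactic}.

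For perfectness, observe that every $\ev_v$ already satisfies $(\ev_v,f) = f(v) = 0$, so the span of $\{\ev_v : v \in X\}$ is contained in the hyperplane $f^\perp$ of dimension $\binom{n+2}{2}-1$; the task is to show this containment is an equality. Take any quadratic function $g$ orthogonal to every $\ev_v$; then $g(v) = 0$ for all $v \in X$. Setting $p(w) = g(c+w)$, a quadratic polynomial vanishing on $\{w_v\}$, the $4$-design property applied to the degree-$4$ polynomial $p^2$ gives
\[ 0 = \frac{1}{|X|} \sum_{v \in X} p(w_v)^2 = \int_{S_Q(c,\sqrt{\mu})} p(x-c)^2 \, d\omega(x), \]
so $p$ vanishes identically on the ellipsoid $\{w : Q[w] = \mu\}$. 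Comparing with $\tilde f(w) = Q[w] - \mu$ line by line through the origin (for every direction $d$ the line $w = td$ meets the ellipsoid in two points, since $Q$ is positive definite, and on this line $p$ and $\tilde f$ are degree-$\leq 2$ polynomials with the same roots) yields $p = \lambda \tilde f$ for a scalar $\lambda$ fixed already by the constant term, independently of $d$. Translating back gives $g = \lambda f$, so $\{\ev_v : v \in X\}$ spans all of $f^\perp$.

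The main obstacle is the final algebraic implication: making the leap from ``$p$ vanishes on the ellipsoid'' to ``$p$ is a scalar multiple of $\tilde f$''. This step hinges on the ellipsoid being a non-degenerate $(n-1)$-dimensional real quadric, which is guaranteed by $Q$ being positive definite together with $\mu > 0$, so the line-by-line comparison above covers every direction. Everything else reduces to bookkeeping of the $k = 1, 2, 4$ moment identities supplied by the $4$-design hypothesis, which is why it is exactly the $4$-design strength, and not merely a $2$-design, that is needed to promote eutaxy to full perfectness.
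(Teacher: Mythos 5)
Your proof is correct and follows essentially the same route as the paper's: reduce to perfectness and eutaxy via Theorem~\ref{th:qf_principal}, verify eutaxy with uniform weights $1/|X|$ and the $k=1,2$ moment identities, and verify perfectness by applying the $4$-design identity to $g^2$ for any $g$ orthogonal to all $\ev_v$. The only difference is cosmetic: you spell out the elementary line-by-line argument for why a quadratic vanishing on the ellipsoid $\{w : Q[w]=\mu\}$ must be a scalar multiple of $Q[w]-\mu$, a step the paper states without proof.
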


Like previously, Theorem~\ref{th:strongly} immediately
follows from Theorem~\ref{th:qf_strongly}. The proof of the second
theorem uses our characterization of inhomogeneous extreme forms in
Theorem~\ref{th:Voronoi}. It shows, like in the homogeneous case, that
having spherical $2$-designs already implies eutaxy, and that having
spherical $4$-designs implies perfectness.

\begin{proof}[Proof of Theorem~\ref{th:qf_strongly}]
  Let $f$ be a strongly perfect quadratic function. The set $X =
  \vertex \Del f$ carries a spherical $4$-design with respect to the
  quadratic form $Q_f$.

  We shall show that $f$ is eutactic: If we unfold the equation in the
  definition of eutactic quadratic functions, we get
\begin{equation*}
\left\lbrace\begin{array}{rcl}
1 & = & \sum\limits_{x \in X} \alpha_x,\\
0 & = & \sum\limits_{x \in X} \alpha_x(x - c_f),\\
\frac{\mu(f)}{n}Q_f^{-1} & = & \sum\limits_{x \in X} \alpha_x (x - c_f)(x - c_f)^t.
\end{array}\right.
\end{equation*}
We set $\alpha_x = \frac{1}{|X|}$ with $x \in X$, so that the first
condition in Definition~\ref{def:perfect_eutactic} (iii) is
satisfied. Then, by looking at the alternative definition of spherical
$1$- and $2$-designs, we see that the other two conditions are
satisfied, see e.g.\ \cite[Lemma 5.1]{SV1}.

  We shall show that $f$ is perfect: Let $g$ be a quadratic function
  which satisfies the linear equations
\begin{equation*}
  (\ev_x, g) = g(x) = 0 \quad \text{for all $x \in X$}.
\end{equation*}
Since $X$ carries a spherical $4$-design, we have
\begin{equation*}
  0 = \frac{1}{|X|} \sum_{x \in X} g(x)^2 = \int_{S_{Q_f}(c_f, \sqrt{\mu(f)})} g(x)^2 d\omega(x).
\end{equation*}
So, $g$ vanishes on $S_{Q_f}(c_f, \sqrt{\mu(f)}) = \{x \in \R^n : f(x)
= 0\}$. Hence, it has to be a multiple of $f$. So the space spanned by
the functions $\ev_x$, with $x \in X$, has codimension~$1$ in the
$\binom{n+2}{2}$-dimensional space of quadratic functions. In other
words, $f$ is perfect.
\end{proof}

 Using Theorem~\ref{th:qf_strongly} one can show that the PQFs
  belonging to the lattices $\mathsf{E}_6, \mathsf{E}_7$,
  $\mathsf{BW}_{16}$, $\Lambda_{23}$, $\mathsf{O}_{23}$ are
  inhomogeneous strongly perfect and hence inhomogeneous
  extreme. Geometrically this says that these 
  lattices yield local covering maxima. These are all inhomogeneous strongly PQFs we know of. In Table~\ref{table:stronglyperfect} we give some details about these PQFs and the Delone polytopes:
The second column gives the number of orbits of Delone polytopes. In all these cases there is only one orbit corresponding to points $c$ where $\mu(Q)$ is attained. In the last column we give a reference where a description of the orbits can be found.

 The PQFs belonging to the lattices $\Z^n$, $\mathsf{D}_n$, $\mathsf{E}_6^*$, $\mathsf{E}_7^*$, $\mathsf{E}_8$,  $\mathsf{K}_{12}$ are not inhomogeneous perfect. However they are
  inhomogeneous eutactic. We will get a geometrical interpretation
  from Theorem~\ref{th:pessimum}: These lattices yield local \emph{covering
    pessima}, i.e.\ the set of perturbations in which the covering
  density decreases has measure zero. Section~\ref{sec:pessima} is concerned with covering pessima.
In Table~\ref{table:inhomogeneouseutactic} we give some details about these PQFs and the Delone polytopes.  

  A PQF belonging to the Leech lattice is neither inhomogeneous
  perfect nor inhomogeneous eutactic. In fact, geometrically, the
  Leech lattice gives a local minimum for the covering density, see
  \cite{SV1}.

\renewcommand{\thetable}{\arabic{section}.\arabic{table}}

\begin{table}[htb]
\begin{tabular}{c|c|c|c|l}
name & \# orbits & $|\Min_c(Q)|$ & $s(\Min_c(Q))$ & reference\\
\hline
$\mathsf{E}_6$ & $1$ & $27$ & $4$ & Conway, Sloane \cite{CS2}\\
$\mathsf{E}_7$ & $2$ & $56$ & $5$ & CS \cite{CS2}\\
$\mathsf{BW}_{16}$ & $4$ & $512$ & $5$ & Dutour Sikiri\'c,\\
                              &          &          &         & Sch\"urmann, Vallentin \cite{DSV2}\\
$\mathsf{O}_{23}$ & $5$ & $94208$ & $7$ & DSV \cite{DSV2}\\
$\Lambda_{23}$ & $709$ & $47104$ & $7$ & DSV \cite{DSV2}\\
\end{tabular}
\\[0.3cm]
\caption{Lattices belonging to inhomogeneous strongly perfect forms.}
\label{table:stronglyperfect}
\end{table}

\begin{table}[htb]
\begin{tabular}{c|c|c|c|l}
name & \# orbits & $|\Min_c(Q)|$ & $s(\Min_c(Q))$ & reference\\
\hline
$\Z^n$ & $1$ & $2^n$ & $3$ & Conway, Sloane \cite{CS2}\\
$\mathsf{D}_3$ & $2$ & $6$ & $3$ & CS \cite{CS2}\\
$\mathsf{D}_4$ & $1$ & $8$ & $3$ & CS \cite{CS2}\\
$\mathsf{D}_n$, $n \geq 5$ & $2$ & $2^{n-1}$ & $3$ & CS \cite{CS2}\\
$\mathsf{E}_{6}^*$ & $1$ & $9$ & $2$ & CS \cite{CS2}\\
$\mathsf{E}_{7}^*$ & $1$ & $16$ & $3$ & CS \cite{CS2}\\
$\mathsf{E}_8$ & $2$ & $16$ & $3$ & CS \cite{CS2}\\
$\mathsf{K}_{12}$ & $4$ & $81$ & $3$ & Dutour Sikiri\'c,\\
                              &          &          &         & Sch\"urmann, Vallentin \cite{DSV2}
\end{tabular}
\\[0.3cm]
\caption{Lattices belonging to inhomogeneous eutactic forms. }
\label{table:inhomogeneouseutactic}
\end{table}

We finish this section by posing several problems:

\begin{itemize}
\item[(i)] Are there strongly perfect functions which do not define
  inhomogeneous strongly perfect forms?
\item[(ii)] Is a PQF of the Barnes-Wall lattice $\mathsf{BW}_{2^d}$ for $d \geq 5$
  inhomogeneous strongly perfect?
\item[(iii)] Is a PQF of the Thompson-Smith lattice $\Lambda_{248}$
  inhomogeneous strongly perfect?
\item[(iv)] It would be interesting to classify strongly perfect
  quadratic functions in low dimensions. So far only a classification
  up to dimension~$6$ is known. It is described in the next
  section. In the homogeneous case, strongly perfect forms have been
  classified up to dimension $12$ by Nebe and Venkov \cite{NV}.
\end{itemize}

\section{Finiteness and classification}
\label{sec:finiteness}

In this section we show that there are only finitely many inequivalent
perfect quadratic functions, respectively eutactic quadratic
functions, in a given dimension. Here, equivalence is defined using
scaling and using the action of the affine general linear group
\begin{equation*}
  \AGL_n(\Z) = \{u : \R^n \to \R^n : u(x) = v + Ax, \;\; \text{with $v \in \Z^n$ and $A \in \GL_n(\Z)$}\}.
\end{equation*}
More precisely, we say that two quadratic functions $f$ and $g$ are
\emph{equivalent} if there exists a positive scalar $\lambda$ and $u
\in \AGL_n(\Z)$ so that $f(x) = \lambda g(u(x))$.

\begin{theorem}\label{th:finiteness}
  In any dimension there are only finitely many inequivalent perfect
  quadratic functions, respectively weakly eutactic quadratic functions.
\end{theorem}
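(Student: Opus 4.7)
The plan is to reduce both finiteness claims to the classical result from Voronoi's second memoir that there are only finitely many $\AGL_n(\Z)$-orbits of Delone subdivisions of $\R^n$ in any fixed dimension, and consequently only finitely many orbits of vertex sets $V \subset \Z^n$ that arise as $\vertex \Del f$ for some $f \in \EC_{>0}$. I would stratify the positive Erdahl cone by the combinatorial type of $V = \vertex \Del f$ and argue stratum by stratum, since the number of strata modulo $\AGL_n(\Z)$ is already finite.

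For the perfect case the argument is immediate. By perfectness, the span of $\{\ev_v : v \in V\}$ has codimension exactly one in the $\binom{n+2}{2}$-dimensional space of quadratic functions, so the linear system $g(v)=0$ for all $v\in V$ has a one-dimensional solution space. Hence $f$ is determined up to a positive scalar by $V$, and any $u \in \AGL_n(\Z)$ sends a perfect function with vertex set $V$ to one with vertex set $u(V)$. The finiteness of $\AGL_n(\Z)$-orbits of such vertex sets then gives finitely many equivalence classes of perfect functions.

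For the weakly eutactic case $f$ is no longer recoverable from $V$, so I would work on each stratum $\EC_V = \{f \in \EC_{>0} : \vertex \Del f = V\}$, a relatively open subset of the rational polyhedral cone $\{f \in \EC_{\geq 0} : f(v)=0 \text{ for all } v\in V\}$. On $\EC_V$, the weak eutaxy condition
\[
\ev_{c_f}+\frac{\mu(f)}{n}Q_f^{-1} \in \mathrm{span}\{\ev_v : v \in V\}
\]
is real-algebraic in the coefficients of $f$, since $c_f = -Q_f^{-1}b_f$, $\mu(f)=Q_f[c_f]-\alpha_f$ and $Q_f^{-1}$ all depend rationally on $f$. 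Thus the weakly eutactic locus $W_V \subseteq \EC_V$ is a real-algebraic subvariety, and it remains to show that $W_V$ consists of only finitely many positive rays.

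The main obstacle is this last step. I would rule out a continuous one-parameter family $\{f(t)\}$ of pairwise non-proportional functions in $W_V$ by exploiting the strict convexity of $H$ on line segments of $\EC_V$ established in the proof of Lemma~\ref{lem:boundary}. After rescaling so that $\mu(f(t))\equiv 1$, the gradient $-(\grad H)(f(t))$ would be forced to stay inside the fixed finite-dimensional subspace $\mathrm{span}\{\ev_v : v \in V\}$ throughout the family; evaluating the eutaxy relation on a secant $[f(0),f(1)]$ would then produce a line segment on which $H$ is affine, contradicting strict convexity. Together with the finiteness of strata modulo $\AGL_n(\Z)$, this collapses each $W_V$ to finitely many rays and yields finitely many equivalence classes of weakly eutactic quadratic functions.
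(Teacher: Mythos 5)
Your proposal takes essentially the same route as the paper: reduce to finitely many $\AGL_n(\Z)$-orbits of Delone polytopes via Voronoi's second memoir, observe that perfectness pins down $f$ up to positive scaling, and use strict convexity of $H$ on the stratum $\Delta(D)=\{f\in\EC_{>0}:\Del f=D\}$ to get uniqueness in the weakly eutactic case. The perfect case is identical to the paper's.

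For the weakly eutactic case, though, the final step as you have written it is not quite right. The crisp observation (and the one the paper uses) is that the weak eutaxy condition for $f$ is \emph{equivalent} to $-(\grad H)(f)$ being orthogonal to $\lin\Delta(D)$, i.e.\ to $f$ being a critical point of $H$ restricted to $\Delta(D)$. By the strengthened form of Lemma~\ref{lem:boundary}, $H$ is strictly convex on $\Delta(D)$ after normalizing $\mu\equiv 1$, and a strictly convex function has at most one critical point (which is then a global minimum). Your phrasing --- that ``evaluating the eutaxy relation on a secant $[f(0),f(1)]$ would produce a line segment on which $H$ is affine'' --- is not a step one can actually make: the eutaxy relation holds only at the endpoints, and one cannot conclude anything about $H$ being affine in between. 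The contradiction comes instead from the fact that the derivative of a strictly convex function restricted to a segment is strictly increasing and hence cannot vanish at both endpoints. Relatedly, you do not need the detour through real-algebraic geometry or the framing of ``ruling out a continuous one-parameter family'': once the critical-point characterization is in hand, strict convexity rules out \emph{any} two non-proportional solutions at once. Tightening this last paragraph to the critical-point argument would make the proof coincide with the paper's.
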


\begin{proof}
  From the work of Voronoi \cite[\S98]{Vor2} (see also Deza, Laurent
  \cite[Chapter 13.3]{DL}) it follows that, up to $\AGL_n(\Z)$
  equivalence, there are only finitely many Delone polytopes of
  quadratic functions. This implies that there are only finitely many
  inequivalent perfect quadratic functions.

  Now we argue that every Delone polytope $D$ determines up to
  equivalence at most one eutactic quadratic function. For this we
  define the cone
\begin{equation}
\label{eq:deltacone}
  \Delta(D) = \{f \in \EC_{>0} : \Del f = D\}.
\end{equation}
Since the function $\mu$ is strictly positive on it, 
Lemma~\ref{lem:boundary} and its proof show that $H$ has at most one critical point, which is a minimum of $H$.

If $f$ is weakly eutactic, then for all $g \in
\Delta(D)$ we have 
\begin{equation*}
\begin{array}{rcl}
(-(\grad H)(f), g)
& = & \displaystyle\frac{1}{(\det Q_{f})^{1/n}}\left(\ev_{c_{f}} + \frac{\mu(f)}{n}Q_{f}^{-1}, g\right)\\
& = & \displaystyle\frac{1}{(\det Q_{f})^{1/n}}\left(\sum_{v \in \vertex \Del f} \alpha_v \ev_{v}, g\right)\\
& = & 0,
\end{array}
\end{equation*}
and hence $f$ is a critical point of $H$. 
\end{proof}

Perfect quadratic functions have been classified up to dimension~$6$;
the classifications in dimension~$7$ and~$8$ seem to be complete:

\begin{description}
\item[Dimension 2, \dots, 5] Erdahl \cite[Theorem 5.1]{Erd} showed that
  there are no perfect quadratic functions in dimension $n = 2,
  \ldots, 5$.

\item[Dimension 6] Dutour \cite{Dut} showed that up to equivalence
  there is exactly one perfect quadratic function in dimension $6$: It
  is defined by the Schl\"afli polytope $2_{21}$ in dimension $6$
  having $27$ vertices (see e.g.\ \cite[Chapter 11.8]{Cox}). It is
  strongly perfect since the vertices of $2_{21}$ carry a spherical
  $4$-design.

\item[Dimension 7] In dimension $7$ there are two perfect quadratic functions known. The list is given in Dutour,
  Erdahl, Rybnikov \cite[Section 7]{DER}: One is
  defined by the Gosset polytope $3_{21}$ in dimension $7$ having $56$
  vertices (see e.g.\ \cite[Chapter 11.8]{Cox}). It is strongly
  perfect since the vertices of $3_{21}$ carry a spherical
  $5$-design. The other one is defined by the $35$-tope constructed by
  Erdahl, Rybnikov \cite{ER2}. It is eutactic (although the strength of the design is $0$), but it is not strongly perfect.

\item[Dimension 8] In dimension $8$ there are $27$ perfect quadratic functions known. They are described in Dutour,  Erdahl, Rybnikov \cite[Section 8]{DER}.  $21$ of them are eutactic, among them there is no strongly
  perfect quadratic function.
\end{description}

It would be interesting to understand the asymptotics of the number of perfect quadratic functions and the number of eutactic quadratic functions. At the moment it is not even clear whether the number grows with every dimension. This appears to be extremely likely: In dimension $9$ we found more than $100,000$ perfect quadratic functions.

\section{Pessima and topological Morse functions}
\label{sec:pessima}

In this section we study inhomogeneous eutactic forms. First we consider inhomogeneous eutactic forms which are not inhomogeneous perfect. They can be almost local maxima for the inhomogeneous Hermite invariant. By this we mean the following: A PQF is called a \emph{pessimum}, if it is not a local maximum of the inhomogeneous Hermite invariant, but for which almost all local perturbations decrease it. Note that there does not exist an analogue of pessima for the homogeneous Hermite invariant: There is no PQF for which almost all local perturbations increase the Hermite invariant. However, it is known (\u{S}togrin \cite{Stogrin}) that when a PQF is eutactic then the Hermite invariant decreases in almost every direction.

\begin{theorem}
\label{th:pessimum}
Let $Q$ be an inhomogeneous eutactic PQF which is not inhomogeneous extreme. Suppose for all quadratic functions $f$ lying in the positive Erdahl cone with $Q = Q_f$ and $\mu(Q) = \mu(f)$, the Delone polyhedron $\Del f$ is not a simplex. Then $Q$ is a pessimum.
\end{theorem}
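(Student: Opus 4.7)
The plan is to verify the two parts of the pessimum definition. That $Q$ is not a local maximum of $\gamma_i$ is immediate from the hypothesis that $Q$ is not inhomogeneous extreme. For the other part I need to show that almost every perturbation direction $H$ in the tangent space of symmetric matrices strictly decreases $\gamma_i$ at first order. Expanding $\log\gamma_i(Q+tH)=\log\mu(Q+tH)-\tfrac{1}{n}\log\det(Q+tH)$, this is equivalent to establishing that outside a Lebesgue null set of $H$,
\begin{equation*}
\mu'(Q;H)<\langle P,H\rangle,\qquad P=\tfrac{\mu(Q)}{n}Q^{-1},
\end{equation*}
where $\mu'(Q;H)$ denotes the one-sided directional derivative of $\mu$ at $Q$.

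First I would compute $\mu'(Q;H)$ explicitly. For each centre $c$ attaining $\mu(Q)$, simultaneous variation $Q\to Q+tH$ and $c\to c+t\dot{c}$ produces a first-order change $\min_{v\in\Min_c Q}(H[v-c]-2(v-c)^{t}Q\dot{c})$ in $\mu_c(Q)=\min_v Q[v-c]$; maximising over $\dot{c}$ and invoking LP duality yields
\begin{equation*}
\dot{\mu}_c(H)=\min_{\lambda\in\Delta_c}\sum_{v\in\vertex\Del f_c}\lambda_v H[v-c],
\end{equation*}
where $\Delta_c=\{\lambda\ge 0:\sum_v\lambda_v=1,\ \sum_v\lambda_v(v-c)=0\}$. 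The full derivative is then $\mu'(Q;H)=\max_c\dot{\mu}_c(H)$, the maximum taken over the finitely many $\Z^n$-orbits of centres attaining $\mu(Q)$.

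Second I would exploit inhomogeneous eutaxy. Matching entries of the defining matrix equation and using $\sum_v\alpha_v=1$ and $\sum_v\alpha_v v=c$ rewrites the bottom-right block as $P=\sum_v\alpha_v(v-c)(v-c)^{t}$ with positive $\alpha_v$. Hence $\alpha$ lies in the relative interior of $\Delta_c$, and substituting $\lambda=\alpha$ into the formula for $\dot{\mu}_c(H)$ gives the uniform bound $\dot{\mu}_c(H)\le\langle P,H\rangle$ for every $c$, so that $\mu'(Q;H)\le\langle P,H\rangle$ for every $H$.

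The crucial step is then analysing when equality holds. Because $\alpha$ is an interior point of $\Delta_c$, the linear functional $\lambda\mapsto\sum_v\lambda_v H[v-c]$ attains its minimum at $\alpha$ only when it is constant on $\Delta_c$, equivalently when there exist $a\in\R$, $b\in\R^n$ with $H[v-c]=a+b\cdot(v-c)$ for all $v\in\vertex\Del f_c$. The non-simplex hypothesis $|\vertex\Del f_c|\ge n+2$ forces this to be a non-trivial linear condition, and hence cuts out a proper linear subspace of the space of symmetric matrices; the union of these subspaces over the finitely many orbits of $c$ is Lebesgue null. Thus $\mu'(Q;H)<\langle P,H\rangle$ holds on a set of full measure, proving that $Q$ is a pessimum. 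The main obstacle is the final linear-algebraic argument that the non-simplex condition is enough to make the equality set a proper subspace, which reduces to verifying that the evaluation vectors $\ev_v$ for $v\in\vertex\Del f_c$ span beyond the minimum possible rank $n+1$; this is a general-position fact for full-dimensional Delone polytopes but needs care, as does the justification that for small $t$ only centres from the original maximising set contribute to $\mu(Q+tH)$.
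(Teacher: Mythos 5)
Your proof is correct and reaches the result via a route that, while resting on the same two pillars as the paper (eutaxy supplies the first-order bound, the non-simplex hypothesis supplies genericity of strict decrease), is technically different in how those pillars are deployed. The paper picks a generic perturbed form $Q'$ (all Delone polytopes simplices), selects a Delone simplex $\Delta$ of $Q'$ inside a maximal-circumradius Delone polytope $D$ of $Q$, and plugs the corresponding quadratic function $f'$ into the Taylor expansion of $H$ from Lemma~\ref{lem:gradient}: eutaxy makes the first-order term $-\sum_{v\in\vertex D}\alpha_v(f'-f)(v)$, and any vertex $v\in D\setminus\Delta$ has $(f'-f)(v)=f'(v)>0$, so the term is strictly negative. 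You instead compute the one-sided directional derivative of $\mu$ as a max over centres of an LP value $\dot\mu_c(H)=\min_{\lambda\in\Delta_c}\sum_v\lambda_v H[v-c]$, substitute the eutaxy weights $\alpha$ to obtain $\dot\mu_c(H)\le\langle P,H\rangle$, and characterize equality as membership in a linear subspace of $\Sym(n)$. Your approach is more systematic and makes the null set explicit as a finite union of proper subspaces; the paper's is shorter because it recycles Lemma~\ref{lem:gradient} and exhibits a positive term directly.

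The one step you flag as delicate can be closed cleanly, and it is worth writing out: you need that $\operatorname{rank}\{\ev_v:v\in\vertex D\}>n+1$ whenever $D$ is a full-dimensional non-simplex. Choose an affine basis $v_0,\dots,v_n\in\vertex D$ and any further vertex $w$. If $\ev_w=\sum_i\lambda_i\ev_{v_i}$, comparing constant and linear parts gives $\sum_i\lambda_i=1$ and $w=\sum_i\lambda_i v_i$, so the $\lambda_i$ are the barycentric coordinates of $w$. For $j\neq k$ the quadratic function $L_jL_k$ (product of barycentric coordinate functions) vanishes at every $v_i$, hence $(\ev_w,L_jL_k)=\lambda_j\lambda_k=0$; thus all but one $\lambda_i$ vanish and $w\in\{v_0,\dots,v_n\}$, a contradiction. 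So the rank is at least $n+2$, the subspace $\{H:\exists a,b\ \forall v\in\vertex D,\ H[v-c]=a+b\cdot(v-c)\}$ has dimension $\binom{n+2}{2}-\operatorname{rank}\{\ev_v\}\le\binom{n+1}{2}-1<\dim\Sym(n)$, and the finite union over orbits of $c$ is Lebesgue null as you claim. With this filled in, your argument is complete.
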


\begin{proof}
Let $Q'$ be a generic perturbation of $Q$ so that all Delone polytopes of $Q'$ are simplices. Let $\Delta$ be a Delone simplex contained in a Delone polytope $D = \Del f$ of $Q$. Let $f'$ be the quadratic function with $\Del f = \Delta$ and $Q_{f'} = Q'$. Then we have the expansion
\begin{equation*}
H(f') = H(f) - \sum_{v \in \vertex D} \alpha_v (f'-f)(v) + \text{h.o.t.},
\end{equation*}
because $f$ is eutactic.  Since $D$ is not a simplex, there is a $v \in \vertex D$ so that $(f'-f)(v) > 0$. This implies that the second summand of the expansion is negative.
\end{proof}

This situation occurs for instance for the PQFs belonging to lattice given in Table~\ref{table:inhomogeneouseutactic}.

As a second application we show that the inhomogeneous Hermite invariant is generally not a topological Morse function.  We recall the following definition from Morse~\cite{Mor}.

\begin{definition}
  Let $M$ be an $m$-dimensional topological manifold and let $f$ be a
  real valued continuous function on $M$.
\begin{itemize}
\item[(i)] A point $q \in M$ is called \emph{topologically ordinary}
  if there exist neighborhoods $U$ of $q$ and $V$ of $0\in \R^m$ and a
  homeomorphism $\phi:V \to U$ such that for all $x \in V$
\begin{equation*}
\phi(0) = q, \;\; f(\phi(x))=x_1+f(q).
\end{equation*}
 Otherwise, it is called  \emph{topologically critical}.
\item[(ii)] A topologically critical point is called
  \emph{topologically non-degenerate} of index $r$ if there exist $U$,
  $V$, $\phi$ as above such that for all $x \in V$
\begin{equation*}
\phi(0) = q, \;\;  f(\phi(x))=-x_1^2-  \dots- x_r^2+x_{r+1}^2+\dots+x_m^2+f(q).
\end{equation*}
\item[(iii)] A function is called \emph{topological Morse function} if
  all points are either ordinary or topologically non-degenerate.
\end{itemize}
\end{definition}

Note that at a topological non-degenerate point the directions of decrease are homotopically equivalent to the sphere $S^{r-1} = \{x \in \R^{r} : \|x\| = 1\}$. The directions of increase are homotopically equivalent to the sphere $S^{m-r-1}$.

Since $H$ is scale invariant, it is not a topological Morse function for trivial reasons; the same is true for the homogeneous Hermite invariant $\gamma$. Ash \cite{Ash} showed that $\gamma$ is a topological Morse function on the cone of positive semidefinite $n \times n$-matrices where we mod out by positive scaling: $\mathcal{S}_{>0}^n / \R_{>0}$. As the following theorem shows, this is in general not the case for $H$.

\begin{theorem}\label{th:morse}
The inhomogeneous Hermite invariant is a topological Morse function on $\mathcal{S}_{>0}^n / \R_{>0}$ if and only if $n$ is at most three.
\end{theorem}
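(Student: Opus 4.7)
The proof is naturally a local analysis of $H$ at each point of $\mathcal{S}^n_{>0}/\R_{>0}$. Non-critical points are topologically ordinary for free, so I would first identify the critical set: by Lemma~\ref{lem:gradient} together with a subdifferential argument applied to the convex function $\mu$, the critical points of $H$ on the quotient are precisely the equivalence classes of inhomogeneous weakly eutactic PQFs. By Theorem~\ref{th:finiteness} this is a finite list in every dimension.

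For the direction $n \leq 3 \Rightarrow H$ is Morse, I would enumerate this finite list explicitly. In $n = 2$ only $\mathsf{A}_2$ occurs, with a triangular (simplex) deep hole, so $H$ is smooth there and the non-degenerate minimum of classical Morse theory applies. In $n = 3$ the list contains $\mathsf{A}_3^*$ (simplex deep hole, smooth local minimum) together with the eutactic but non-perfect forms $\mathsf{D}_3$ (octahedral deep hole) and $\mathsf{Z}^3$ (cubical deep hole) from Table~\ref{table:inhomogeneouseutactic}. For each non-smooth critical point $f_0$ I would stratify a neighbourhood by the combinatorial type of Delone refinement of $\Del f_0$. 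On each stratum $H$ is smooth, and the first-order expansion
\begin{equation*}
H(f') - H(f_0) \;=\; -\sum_{v \in \vertex \Del f_0} \alpha_v \,(f'-f_0)(v) + \mathrm{h.o.t.}
\end{equation*}
(valid by eutaxy and Lemma~\ref{lem:gradient}) controls the local behaviour. In dimension $3$, the secondary fan of an octahedron or a cube is simple enough that the stratified level sets glue into topological spheres, and one can build an explicit homeomorphism straightening them into the level sets of a standard quadratic form of the appropriate index.

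For the direction $H$ is Morse $\Rightarrow n \leq 3$, it suffices to exhibit one critical point in dimension $4$ which fails topological non-degeneracy. I would take the PQF of $\mathsf{D}_4$, a pessimum by Theorem~\ref{th:pessimum} whose deep hole is the regular $4$-cross-polytope. By the first-order expansion above, $H$ strictly decreases off the hyperplane arrangement $\bigcup_v \{(f'-f_0)(v) = 0\}$; in particular $\mathsf{D}_4$ is not a topological local maximum. Inside this measure-zero arrangement, the higher-order behaviour is governed by the secondary fan of the $4$-cross-polytope, whose combinatorial complexity (strictly richer than any deep-hole polytope in dimension $\leq 3$) forces the super-level set $\{H > H(\mathsf{D}_4)\}$ in a punctured neighbourhood to have a homotopy type which is not a sphere $S^{m-r-1}$ for any index $r$. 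This rules out both the ordinary case and the non-degenerate case of the Morse definition.

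The main obstacle is the detailed topology of level sets of $H$ at pessima: one must show that despite the piecewise-smooth non-differentiability of $H$, the local level sets are spherical in dimension $\leq 3$ but not in dimension $\geq 4$. This rests on a combinatorial analysis of the secondary fan (equivalently, fiber polytope) of the relevant deep-hole polytopes, together with an explicit straightening homeomorphism in low dimension and a homotopical obstruction argument in high dimension.
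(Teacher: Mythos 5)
Your general strategy matches the paper's: enumerate the finitely many critical points (weakly eutactic forms, one per secondary cone up to $\GL_n(\Z)$), verify topological non-degeneracy in low dimension, and exhibit $\mathsf{D}_4$ as the first failure. But there are concrete gaps. First, and most seriously, you handle only $n=4$ in the negative direction; the theorem requires a failure for every $n\geq 4$, and the paper supplies $\mathsf{D}_4\times\Z^{n-4}$ for $n>4$ — without some such product construction your ``only if'' is not proved. Second, your diagnosis of what goes wrong at $\mathsf{D}_4$ misses the decisive point. The paper's criterion (its Lemma preceding the theorem) is that a eutactic critical point $f_0$ is topologically non-degenerate if and only if, among the translation classes $D_1,\dots,D_r$ of deep-hole Delone polytopes, one $D_i$ has $\lin\Delta(D_j)\subseteq\lin\Delta(D_i)$ for all $j$: then the set $U=\bigcup_j\lin\Delta(D_j)$ of directions of increase is a single subspace, giving a spherical link; otherwise $U$ is a union of incomparable subspaces and cannot be a sphere. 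For $\mathsf{D}_4$ there are \emph{three} translation classes of regular cross-polytopes with pairwise incomparable $\lin\Delta(D_i)$, and that is what breaks non-degeneracy — not the internal combinatorial complexity (``secondary fan'') of a single cross-polytope. Your argument as written would not distinguish $\mathsf{D}_4$ from, say, $\Z^3$ (whose deep hole, the cube, also has a rich secondary fan but has a unique translation class and hence is non-degenerate). Third, smaller inaccuracies: in $n=2$ both $\Z^2$ and $\mathsf{A}_2$ are critical, not just $\mathsf{A}_2$; and in $n=3$ the paper works through all five Delone subdivision types, so listing only $\mathsf{A}_3^*$, $\mathsf{D}_3$, $\Z^3$ leaves two cases unexamined. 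Finally, your closing claim that one needs ``explicit straightening homeomorphisms'' and ``homotopical obstruction arguments'' overstates the difficulty: once you have the lemma's linear-algebraic criterion on the $\lin\Delta(D_i)$'s, both directions become routine inspections.
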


We need the following lemma:

\begin{lemma}
Let $Q$ be an inhomogeneous eutactic form. Then $Q$ is a topologically critical point for $H$ in $\mathcal{S}_{>0}^n / \R_{>0}$. It is a topologically non-degenerate point if and only if there exist one Delone polytope $D$ attaining the maximum circumradius such that for all Delone polytopes $D'$ attaining the maximum circumradius we have
\begin{equation*}
\lin\Delta(D') \subseteq\lin\Delta(D),
\end{equation*}
where $\Delta$ was defined in \eqref{eq:deltacone}.
\end{lemma}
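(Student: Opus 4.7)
The strategy is to locally express $H$ near $Q$ as a pointwise maximum of smooth surrogate functions, each associated to a max-circumradius Delone polytope. For each Delone polytope $D$ of $Q$ attaining $\mu(Q)$, there is a unique $f_D \in \Delta(D)$ with $Q_{f_D} = Q$, and as $Q'$ varies smoothly in the L-type cone of $D$ (the image of $\Delta(D)$ under $f \mapsto Q_f$), the corresponding $f$ varies smoothly too, giving a smooth function $H_D(Q')$ equal to the normalized squared circumradius of $D$ with respect to $Q'$. Since the L-type cones of the max-circumradius polytopes of $Q$ cover a neighborhood of $Q$, on this neighborhood
\begin{equation*}
H(Q') = \max_D H_D(Q'),
\end{equation*}
the max ranging over those $D$ whose L-type cone contains $Q'$.

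For the criticality claim, the correspondence of Section~\ref{ssec:relation} shows that inhomogeneous eutaxy of $Q$ is equivalent to eutaxy of every $f_D$ in the sense of Definition~\ref{def:perfect_eutactic}. Lemma~\ref{lem:gradient} then yields $(\grad H_D)(Q) = 0$ in $\mathcal{S}^n_{>0}/\R_{>0}$, so $H_D(Q') - H_D(Q) = O(\|Q'-Q\|^2)$. Taking pointwise maxima preserves this bound, giving $H(Q') - H(Q) = O(\|Q'-Q\|^2)$. At a topologically ordinary point, however, the level set $H^{-1}(H(Q))$ would locally be a topological hypersurface separating a neighborhood into two half-spaces; a function exhibiting only second-order variation at $Q$ cannot realize such a separating structure, as the critical level sets of each smooth $H_D$ at $Q$ are either lower-dimensional or cone-singular. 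Hence $Q$ is topologically critical.

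For the non-degeneracy characterization I would treat the two directions separately. Assume first that a dominating $D_0$ exists with $\lin\Delta(D') \subseteq \lin\Delta(D_0)$ for every max-circumradius $D'$. Since $\lin\Delta(D) = \{f : f(v) = 0,\, v \in \vertex D\}$, the containment is equivalent to the $\ev$-functionals of $D_0$'s vertices lying in the span of those of every $D'$; after transfer to form space via $f \mapsto Q_f$, this makes the L-type cone of $D_0$ contain a full neighborhood of $Q$ and forces the Hessian of $H_{D_0}$ to dominate that of each $H_{D'}$ there. Thus $H$ coincides with the smooth function $H_{D_0}$ near $Q$, and topological non-degeneracy is equivalent to non-degeneracy of the Hessian of $H_{D_0}$ at $Q$ within $\mathcal{S}^n_{>0}/\R_{>0}$, a standard Morse condition. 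Conversely, failure of domination yields max-circumradius polytopes $D_1, D_2$ and a tangent direction in $\lin\Delta(D_2) \setminus \lin\Delta(D_1)$ along which the Hessians of $H_{D_1}$ and $H_{D_2}$ disagree, creating a genuine non-smooth ridge in $H = \max_D H_D$ near $Q$.

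The main obstacle is the converse direction above: verifying rigorously that the ridge formed by two smooth critical functions with distinct Hessians cannot be transformed to any $\pm x_1^2 \pm \ldots \pm x_m^2$ by a homeomorphism. The cleanest route I envision is to compare the local homotopy type of the descending set $\{H < H(Q)\}$ with the model $S^{r-1}$ that must appear at a non-degenerate critical point of index~$r$; a Mayer--Vietoris decomposition along the two relevant L-type cones of $D_1, D_2$, or a direct attaching-cell argument, should exhibit an extra connected component or non-trivial higher Betti number in the descending set, an obstruction no homeomorphism can eliminate.
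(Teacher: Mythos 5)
Your surrogate-function picture is in the right spirit, but as written the argument has several genuine gaps, and the most delicate step is dodged rather than done. Let me go through them.

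First, the decomposition $H = \max_D H_D$ with each $H_D$ smooth on a neighborhood of $Q$ is not available in the way you set it up. If $D$ is not a simplex (and the interesting cases, e.g.\ the cross polytopes of $\mathsf{D}_4$, are not simplices), the constraint $f(v)=0$ for all $v \in \vertex D$ is overdetermined, so the L-type cone $\pi(\Delta(D))$ has positive codimension in form space; the L-type cones of the max-circumradius polytopes therefore do not cover a neighborhood of $Q$. Nor is ``normalized circumradius of $D$ with respect to $Q'$'' obviously a smooth function off that cone: the smallest enclosing ellipsoid of $\vertex D$ touches a varying subset of vertices as $Q'$ moves, so this function is only piecewise smooth. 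The paper avoids this entirely: it works with the quadratic functions $f' = f_{\Delta,Q'}$ attached to Delone \emph{simplices} of a generic perturbation $Q'$ inside the $D_i$, where the map $Q' \mapsto f'$ is an honest affine bijection, and uses the expansion $H(f') = H(f_{D_i}) - \sum_{v}\alpha_v (f'-f_{D_i})(v) + \text{h.o.t.}$ from the proof of Theorem~\ref{th:pessimum}.

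Second, and more seriously, your deduction of topological criticality from ``$H(Q')-H(Q) = O(\|Q'-Q\|^2)$'' is false. Second-order flatness does not imply topological criticality: the function $x \mapsto x_1^3$ has vanishing gradient at the origin, yet $\phi(x)=(x_1^{1/3},x_2,\dots,x_m)$ turns it into $x_1$, so the origin is a topologically \emph{ordinary} point. Your follow-up sentence about level sets being ``lower-dimensional or cone-singular'' is not a proof; the $x_1^3$ example has level set $\{x_1=0\}$, a perfectly nice hypersurface. What the paper actually establishes (via the expansion above and eutaxy) is a finer directional statement: along directions lying in $U = \bigcup_i \lin\Delta(D_i)$ the linear term vanishes and $H$ strictly \emph{increases} (by the strict convexity of $H$ on each $\Delta(D_i)$, Lemma~\ref{lem:boundary}), while in every other direction the linear term is strictly negative, so $H$ decreases. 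Since $U$ is a union of proper linear subspaces, $\{H > H(Q)\}$ near $Q$ is not a topological half-neighborhood of $Q$, which is exactly what rules out ordinariness. It is the \emph{shape} of the set of ascending directions, not the flatness, that does the work.

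Third, in the forward direction of the non-degeneracy characterization you assert that domination by $D_0$ ``makes the L-type cone of $D_0$ contain a full neighborhood of $Q$''. That is not so unless $D_0$ is a simplex. Domination only gives $U = \lin\Delta(D_0)$, a single subspace; the function $H$ is not globally equal to a single smooth $H_{D_0}$ near $Q$ (off the L-type cone, other simplices inside $D_0$ realize the maximum). The paper's argument does not need such an identity: it only needs that the ascending set deformation-retracts onto $U \cap S_\epsilon$, which is a sphere $S^{\dim U - 1}$ precisely when $U$ is a single subspace.

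Finally, you explicitly flag the converse direction as the ``main obstacle'' and sketch a Mayer--Vietoris/attaching-cell plan. That plan is morally the same device the paper uses, but the paper states it more directly: if $U$ is a union of subspaces not all contained in one of them, then $U$ intersected with the unit sphere is not homotopy equivalent to a sphere (it has a singular stratum, or the wrong Betti numbers), whereas at a topologically non-degenerate critical point of index $r$ the local ascending set is $S^{m-r-1}$. You identified the right invariant to compare but left the computation undone. In short: the overall strategy is close in spirit to the paper's, but the construction of the $H_D$, the step from second-order flatness to criticality, the claim that domination yields smoothness of $H$ near $Q$, and the converse direction all need to be replaced by the directional analysis of the set $U = \bigcup_i \lin\Delta(D_i)$.
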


\begin{proof}
Let $D_1, \ldots, D_r$ be the translation classes of Delone polytopes attaining the maximum circumradius. The argument in the proof of Theorem~\ref{th:pessimum} shows that $H$ increases in the direction of 
\begin{equation*}
U = \bigcup_{i=1}^r \lin\Delta(D_i) / \R_{>0}.
\end{equation*}
It decreases in all other directions. So it is a topologically critical point. If $U = \lin\Delta(D_i)$ for some $D_i$, then $Q$ is a topologically non-degenerate point. If $U$ is a union of subspaces which is not contained in $\lin\Delta(D_i)$ for one $D_i$, then $U$ is not homotopically equivalent to a sphere, so $Q$ is not a topologically non-degenerate point.
\end{proof}

\begin{proof}[Proof of Theorem~\ref{th:morse}]
There is at most one critical point in the secondary cone of a fixed Delone decomposition up to the action of $\GL_n(\Z)$. 

If $n$ equals two, there are two critical points: The PQF corresponding to the lattice $\Z^2$ and the one corresponding to the lattice $\mathsf{A}_2$. They are both inhomogeneous eutactic. In both cases there is only one Delone polytope up to translations and antipodality. So both PQFs are topologically non-degenerate by the previous lemma.

If $n$ equals three, there are five types of Delone subdivisions (due to the Russian crystallographer E.S.~Fedorov, see also Vallentin \cite{Vallentin}).  In all but the generic case one can check the following facts by inspection and elementary hand calculation: For every Delone subdivision which is not a triangulation there is a inhomogeneous eutactic PQF in which the Delone polytopes attaining the maximum circumradius are equivalent up to translations and antipodality. So we can apply the previous lemma, showing that these four points are topologically non-degenerate. In the generic case, where the subdivision is a triangulation, there is a PQF (associated to the lattice $\mathsf{A}^*_3$) where $H$ attains a local minimum.

If $n$ equals four, we consider the PQF which corresponds to the root lattice $\mathsf{D}_4$. It is inhomogeneous eutactic. There are three translation classes of Delone polytopes $D_1, D_2, D_3$ which are all regular cross polytopes realizing the circumradius.  Their linear subspaces $\lin\Delta(D_i)$ are not contained in each other, so by the preceding lemma the PQF is not topologically non-degenerate. 

For $n$ greater than four, we take the PQF which corresponds to the lattice $\mathsf{D}_4 \times \Z^{n-4}$.
\end{proof}

\section{An infinite series of inhomogeneous extreme forms}
\label{sec:series}

In this section we construct a series of inhomogeneous extreme forms for dimensions $n \geq 6$. The first two PQFs in the series correspond to the lattices $\mathsf{E}_6$ and $\mathsf{E}_7$. These PQFs were originally introduced in \cite{Dut2}.

For giving the construction and for its analysis it is convenient not to work with the standard lattice but with the lattice~$L_n$ which is spanned by the root lattice  $(\mathsf{D}_{n-1}, 0)$ and the vector $(-1/2,(1/2)^{n-2}, 1)$. It comes with the PQF
\begin{equation*}
Q_n[x] = 
\left\{
\begin{array}{ll}
x_1^2+\dots + x_{n-1}^2+(n-3)/4 x_n^2 & \text{if $n$ even,}\\
x_1^2+\dots + x_{n-1}^2+(n-5)/4 x_n^2 & \text{if $n$ odd}
\end{array}
\right.
\end{equation*}
We denote this pair by $[L_n, Q_n]$. We have $|\Aut([L_n, Q_n])| = |\Aut(\mathsf{D}_{n-1})|$.

\begin{theorem}
\label{th:series}
For $n\geq 6$, the lattice $[L_n, Q_n]$ are local covering maxima.
\end{theorem}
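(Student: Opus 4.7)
The plan is to apply Theorem~\ref{th:principal}, which reduces the problem to verifying that $Q_n$ is simultaneously inhomogeneous perfect and inhomogeneous eutactic. Equivalently, for every $c \in \R^n$ at which $\mu(Q_n)$ is attained, I must check that the matrices $\binom{1}{v}\binom{1}{v}^t$, $v \in \Min_c Q_n$, span a subspace of dimension $\binom{n+2}{2}-1$ in the space of symmetric $(n+1)\times(n+1)$ matrices, and that the inhomogeneous eutaxy equation admits a strictly positive solution.

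First I would determine the Delone subdivision of $[L_n, Q_n]$ and identify its deep holes. Since $L_n$ is obtained from $\mathsf{D}_{n-1}$ by gluing in the direction of $(-1/2,(1/2)^{n-2},1)$, its Delone polytopes can be described layer-by-layer along $x_n$ using the classical half-cube and cross-polytope decomposition of $\mathsf{D}_{n-1}$. The weight $(n-3)/4$ or $(n-5)/4$ on $x_n^2$ is chosen precisely so that one distinguished Delone polytope $D$ attains the maximum circumradius; for $n=6,7$ this must recover the deep holes of $\mathsf{E}_6,\mathsf{E}_7$ listed in Table~\ref{table:stronglyperfect}. The explicit description of $D$, its center $c$, and $\Min_c Q_n$ would be written down in closed form valid for all $n\geq 6$.

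Once $c$ and $\Min_c Q_n$ are in hand, the remaining work splits along symmetry lines. The stabilizer of $c$ inside $\Aut([L_n,Q_n]) = \Aut(\mathsf{D}_{n-1})$ partitions $\Min_c Q_n$ into a small number of orbits; inhomogeneous eutaxy forces the coefficient $\alpha_v$ to be constant on each orbit, reducing the problem to a small linear system whose solvability and positivity can be checked directly or by induction on $n$. For inhomogeneous perfectness, the only linear relation among the matrices $\binom{1}{v}\binom{1}{v}^t$ should be the one coming from $Q_n[v-c] = \mu(Q_n)$; decomposing the space of quadratic functions into isotypic components under the $\Stab(c)$-action reduces this to a short check on each irreducible piece.

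The main obstacle is the first step: one has to prove that every other point $c'$ attaining $\mu(Q_n)$ lies in the $\Aut([L_n,Q_n])$-orbit of $c$, so that no further perfectness or eutaxy conditions are imposed by competing deep holes. This requires bounding the circumradius of every other Delone polytope of $[L_n,Q_n]$ strictly below $\mu(Q_n)$, which is where the specific arithmetic of the form $Q_n$ genuinely enters. Once this uniqueness of the deep hole up to symmetry is established, the verification of inhomogeneous perfectness and eutaxy proceeds uniformly in $n$, and the theorem follows from Theorem~\ref{th:principal}.
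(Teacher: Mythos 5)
Your plan follows essentially the same route as the paper: reduce to Theorem~\ref{th:principal}, identify the distinguished Delone polytope carrying the deep hole, verify eutaxy by exploiting $\Stab(c)$-orbits, verify perfectness, and then rule out competing deep holes. You also correctly identify the crux: showing that every point attaining $\mu(Q_n)$ is in the $\Aut([L_n,Q_n])$-orbit of the single distinguished center. Where your plan differs harmlessly from the paper is in the perfectness step: you propose a representation-theoretic decomposition of the space of quadratic functions into $\Stab(c)$-isotypic pieces, while the paper simply invokes the earlier result (\cite{Dut2}) that the polytope $P_n$ is a perfect Delone polytope, i.e.\ uniquely determines $[L_n,Q_n]$. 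Either approach should work.

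The genuine gap is that you name the crucial obstacle but supply no mechanism for overcoming it. Saying one ``has to bound the circumradius of every other Delone polytope strictly below $\mu(Q_n)$'' does not explain how to guarantee that the list of ``other'' Delone polytopes one writes down is \emph{complete} --- and without completeness the bound proves nothing. The paper's solution is a volume argument: it (a)~constructs explicit candidate Delone polytopes $P_n$, $H_{i,j}$ and (for $n$ odd) $S_n$; (b)~computes, or lower-bounds, the volume of each via the pyramidal decomposition over a symmetric interior point, using the identities
\begin{equation*}
\vol(\conv(P,v)) = \tfrac{1}{\tdim}\,\dist(v,\aff P)\,\vol(P), \qquad
\vol(P\times Q) = \vol(P)\vol(Q)\tfrac{p!\,q!}{(1+p+q)!};
\end{equation*}
and (c)~verifies that the sum $\sum_i |O(D_i)|\,\vol(D_i)$ over the constructed orbits already equals the total volume of a fundamental domain, forcing the list to be exhaustive and the lower bound $V_n$ on $\vol(P_n)$ to be exact. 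Only then does the strict comparison of circumradii $\mu_{H_{i,j}}<\mu_{P_n}$ and $\mu_{S_n}<\mu_{P_n}$ establish that $P_n$ is the unique deep-hole type, and Theorem~\ref{th:principal} applies. Without this (or an equivalent certificate of completeness, such as a full vertex-figure analysis), your proposal remains a plausible outline rather than a proof: the ``layer-by-layer'' description of the Delone subdivision along $x_n$ is a good starting point for finding candidate polytopes, but it does not by itself certify that no further Delone types exist. You would also need to actually produce the closed-form vertex set of $P_n$, its circumcenter and squared circumradius, and explicit positive eutaxy coefficients --- the paper gives all of these, and they are not immediate.
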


The main step of the computation is to prove that the big Delone polytope $P_n$ defined in the next section is the only one attaining the maximum circumradius. In order to show this we enumerate all Delone polytopes up to symmetry. We shall prove that our list is complete by a volume argument.

In the remaining part of this section will be used to give a proof of the theorem which is largely computational. The idea of the proof is based on the algorithms given in \cite{DSV2} which are implemented in \cite{polyhedral}.

In the proof we heavily rely on the computation of volumes of polyhedra: Let $P$ be a non-necessarily
full dimensional polytope of $\R^n$. By $\vol(P)$ we denote the volume of $P$ for the volume form induced by the scalar product on the affine space $\aff(P)$ defined by $P$. If $v\notin \aff(P)$, we will then have the relation
\begin{equation}
\label{ConeRelation}
\vol(\conv(P, v))=\frac{1}{\tdim(\conv(P,v))}\dist(v, \aff(P)) \vol(P),
\end{equation}
where $\conv(P,v)$ denotes the convex hull of the polytope $P$ and the point $v$, and where $\dist(v, \aff(P))$ denotes the Euclidean distance between $v$ and $\aff(P)$. An easy consequence of this formula is that if $\aff(P)$ is a hyperplane
of dimension $n-1$ defined by an affine equality $\phi(x)=0$, then we have for $v,v'\notin \aff(P)$ the relation
\begin{equation}
\label{AffEquaRelation}
\vol(\conv(P, v)) = \frac{|\phi(v)|}{|\phi(v')|}  \vol(\conv(P, v')).
\end{equation}
Relation (\ref{ConeRelation}) admits a generalization: If $P$, $Q$ are a $p$-, $q$-dimensional polytopes, then the
$1+p+q$-dimensional polytope $P \times Q$ defined as
\begin{equation*}
P\times Q=\conv((0, P, 0^{q}), (1, 0^{p}, Q))
\end{equation*}
has volume
\begin{equation}
\label{VolumeProductFormula}
\vol(P\times Q) = \vol(P)\vol(Q)\frac{p! q!}{(1+p+q)!}.
\end{equation}
In the following we use the notation 
\begin{equation*}
\frac{1}{2}H_n = \left\{ x \in \{0,1\}^n : \sum_{i=1}^n x_i \text{ even}\right\}.
\end{equation*}
for the half cube.

\subsection{The big Delone polytope}

As we shall prove later, there is only one Delone polyope of $[L_n,Q_n]$ where the maximum circumradius is attained. It is the polytope $P_n$ which is defined as follows. If $n$ is even then $P_n$ has the vertices
\begin{equation*}
((1/2)^{n-1}, 1) \pm e_i, i = 1, \ldots, n-1,\;\; ((1/2)^{n-1}, -1), \;\; (\frac{1}{2}H_{n-1},0),
\end{equation*}
If $n$ is odd, then $P_n$ has the vertices
\begin{equation*}
((1/2)^{n-1}, \pm1) \pm e_i, i = 1, \ldots, n-1,\;\; (\frac{1}{2}H_{n-1},0).
\end{equation*}
The squared circumradius of $P_n$ is
\begin{equation*}
\mu_{P_n}=
\left\{
\begin{array}{ll}
(n-2)^2/(4(n-3)), & \text{if $n$ even,}\\
(n-1)/4,         & \text{if $n$ odd.}\\
\end{array}
\right.
\end{equation*}
The center of $P_n$ is
\begin{equation*}
c_{P_n}
=
\left\{
\begin{array}{ll}
((1/2)^{n-1}, 1/(n-3)), & \text{if $n$ even,}\\
((1/2)^{n-1}, 0),           & \text{if $n$ odd.}
\end{array}
\right.
\end{equation*}

It is proved in \cite{Dut2} that $P_n$ uniquely determines $[L_n, Q_n]$ if $n \geq 6$. So the quadratic function $f_n$ corresponding to $P_n$ is inhomogeneous perfect. It is also inhomogenous extreme:

\begin{lemma}
The quadratic function $f_n$ is inhomogeneous eutactic.
\end{lemma}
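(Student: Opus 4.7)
The plan is to exploit the rich symmetry of $P_n$. Let $G$ denote the stabilizer of $c_{P_n}$ inside $\Aut([L_n, Q_n])$. The group $G$ contains $\Aut(\mathsf{D}_{n-1})$ acting on the first $n-1$ coordinates (implemented as sign changes on the shifted coordinates $x_i - 1/2$ composed with permutations, lifted by lattice translations in $\mathsf{D}_{n-1}$), and for $n$ odd it also contains the reflection $x_n \mapsto -x_n$. Averaging any candidate weight system over $G$, it suffices to look for eutactic weights $\alpha_v$ that are constant on $G$-orbits of $\vertex P_n$.

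A direct inspection shows that the $G$-orbits on $\vertex P_n$ are: two orbits for $n$ odd, namely the spike orbit of size $4(n-1)$ and the half-cube orbit of size $2^{n-2}$; and three orbits for $n$ even, where the loss of the $x_n$-reflection splits the spikes into the $2(n-1)$ upper spikes and the bottom singleton, with the half-cube orbit unchanged. I would then compute, after translating by $-c_{P_n}$, the orbital first and second moments
\begin{equation*}
m_O = \sum_{v \in O}(v - c_{P_n}), \qquad \Sigma_O = \sum_{v \in O}(v - c_{P_n})(v - c_{P_n})^t.
\end{equation*}
By Schur's lemma applied to the hyperoctahedral representation on the first $n-1$ coordinates, each $\Sigma_O$ is block diagonal of the form $\diag(a_O, \ldots, a_O, b_O)$, and each $m_O$ vanishes in the first $n-1$ coordinates.

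The eutaxy system for $f_n$ then reduces to: the first-moment equation $\sum_O \alpha_O m_O = 0$ in the $e_n$-direction (trivial for $n$ odd), together with the second-moment matrix equation $\sum_O \alpha_O \Sigma_O = (\mu_{P_n}/n)\, Q_n^{-1}$, which by the diagonal structure amounts to exactly two scalar equations. The normalization $\sum_O \alpha_O |O| = 1$ follows automatically by pairing both sides of the second-moment identity with $Q_n$ and using $Q_n[v - c_{P_n}] = \mu_{P_n}$ for every vertex together with $\trace(Q_n Q_n^{-1}) = n$. This produces a linear system, of size $2 \times 2$ for $n$ odd and $3 \times 3$ for $n$ even, which is solvable in closed form. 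The orbital moments are elementary: the spike orbit contributes $2$ on the first $n-1$ diagonal entries (with a parity-dependent factor in the last entry), the half-cube contributes $2^{n-4}\,I_{n-1}$, and the singleton (for $n$ even) contributes only to the last entry. Positivity of the unique solution reduces to a polynomial inequality such as $n^2 - 6n + 1 > 0$, which is precisely where the hypothesis $n \geq 6$ becomes critical.

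The main obstacle is the bookkeeping in the $n$-even case: because $c_{P_n}$ has nonzero last coordinate, all three orbits are coupled through both the $e_n$ first-moment equation and the two diagonal second-moment equations, and one must verify by explicit calculation that the unique solution of the $3 \times 3$ system has positive entries for every even $n \geq 6$. The $n$-odd case is substantially easier because the extra reflection $x_n \mapsto -x_n$ decouples the last coordinate and forces everything into a transparent $2 \times 2$ system.
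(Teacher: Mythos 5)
Your proposal is essentially a derivation of exactly what the paper simply asserts: the paper's proof consists of listing, for each parity of $n$, the explicit orbit-constant coefficients $a_{-1}, a_0, a_1$ (respectively $a_{\pm 1}, a_0$) and claiming without calculation that they satisfy the eutaxy condition. Your orbit decomposition (three orbits for $n$ even, two for $n$ odd, sizes $1$, $2(n-1)$, $2^{n-2}$, resp.\ $4(n-1)$, $2^{n-2}$), the Schur-lemma reduction of $\Sigma_O$ to $\diag(a_O,\ldots,a_O,b_O)$ via the even-parity sign changes and permutations of the first $n-1$ coordinates, the observation that the normalization $\sum_v \alpha_v = 1$ follows by pairing the second-moment identity against $Q_n$ and using $Q_n[v-c_{P_n}]=\mu_{P_n}$, and the identification of the positivity constraint with a polynomial inequality such as $n^2-6n+1>0$ in the odd case (and $n^2-5n+2>0$ in the even case) all match what falls out when one back-solves the paper's stated coefficients. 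So the approach is the same; the only difference is that you exhibit the linear system and its structure while the paper directly displays its solution. Two small caveats: for $n$ odd the $4(n-1)$ spike vertices split into last coordinates $\pm 1$, but the reflection $x_n\mapsto -x_n$ fuses them into one orbit exactly as you say, so your first-moment equation is indeed vacuous there; and you should state explicitly that one of the two diagonal second-moment equations is linearly dependent on the rest once the normalization is accounted for, so the $3\times 3$ system for $n$ even (and the $2\times 2$ for $n$ odd) is consistent and uniquely solvable rather than overdetermined.
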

\begin{proof}
 The polytope $P_n$ has three orbits of vertices if
  $n$ is even which can be distinguished by considering the last coordinate: $-1$, $0$, $+1$.
Then, the following coefficients satisfy  the eutaxy condition
\begin{eqnarray*}
a_{-1} & = & (n-2)/(2n(n-3)^2),\\
a_0 & = & ((n-2)(n^2-5n+2))/(2^{n-2} n (n-3)^2),\\
a_1 & = & 2/(n(n-3)^2).
\end{eqnarray*}
The polytope $P_n$ has only two orbits of vertices if $n$ is odd which can be distinguished by considering the last coordinate: $\pm 1$, $0$. Then, the following coefficients satisfy the eutaxy condition
\begin{eqnarray*}
a_{\pm 1} & = &  1/(4n(n-5)),\\
a_0 & = & (n^2-6n+1)/(2^{n-2}n(n-5)).
\end{eqnarray*}
\end{proof}

The lower bound on the volume of $P_n$ will turn out to be tight.

\begin{lemma}
The volume of $P_n$ is at least $V_n$ where
\begin{equation*}
\begin{split}
& V_n = 2(n-1)\frac{1}{n(n-1)}\left( 1-\frac{2^{n-3}}{(n-2)!}\right) +2^{n-2} 2^{n-3} \frac{n-3}{n!}\\
& \qquad  +\sum_{j=3}^{n-3} \frac{2^{n-2}(n-1)!}{(i+1)! 2^{j-1}j!} (j!-2^{j-1})\frac{n-j-1}{2n!}\\
& \qquad +\frac{2^{n-1}}{n!}+2^{n-2}\frac{n-1}{2n!}+2^{n-2} \frac{n-3}{2n!},
\end{split}
\end{equation*}
if $n$ is even, and 
\begin{equation*}
\begin{split}
& V_n = 2(n-1)(n-2)\frac{4}{n(n-1)(n-2)}\left( 1-\frac{2^{n-4}}{(n-3)!}\right)+2^{n-1} \frac{n-1}{2n!}\\
& \qquad +\sum_{j=3}^{n-4} \frac{2^{n-1}(n-1)!}{(i+1)! 2^{j-1} j!}(j!-2^{j-1})\frac{n-j-1}{2 n!}\\
& \qquad + 2^{n-1} 2^{n-3}\frac{n-3}{n!}+2 \frac{2^{n-1}}{n!}+2^{n-2}(n-1) \frac{n-4}{n!},
\end{split}
\end{equation*}
if $n$ is odd.
\end{lemma}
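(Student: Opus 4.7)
The plan is to exhibit a family of pairwise interior-disjoint subpolytopes of $P_n$, each spanned by a subset of vertices of $P_n$, whose volumes sum to exactly $V_n$. Convexity of $P_n$ then gives $\vol(P_n) \geq V_n$ without needing to produce a full triangulation. Every subpolytope in the family will either be a cone $\conv(F, v)$ of a face $F$ of the middle layer $(\tfrac{1}{2}H_{n-1}, 0)$ with an apex $v$ at height $\pm 1$, or a product-polytope of the form $P \times Q$ as in \eqref{VolumeProductFormula} joining a top face to a bottom face.

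The first step is to foliate $P_n$ by the hyperplane $x_n = 0$. The vertices split into a middle layer (the half-cube $\tfrac{1}{2}H_{n-1}$) and top/bottom apex vertices; the latter consist of $((1/2)^{n-1},1) \pm e_i$ together with either the single point $((1/2)^{n-1},-1)$ (even $n$) or also $((1/2)^{n-1},-1) \pm e_i$ (odd $n$). I would next fix the standard dissection of the half-cube that writes $\tfrac{1}{2}H_{n-1}$ as a central piece plus $2^{n-2}$ corner simplices of volume $1/(n-1)!$, and use the face lattice of this dissection to enumerate the $F$'s. The $(n-2)$-dimensional facets of $\tfrac{1}{2}H_{n-1}$, coned to $((1/2)^{n-1},1) \pm e_i$ via \eqref{ConeRelation}, produce the first summand $2(n-1)\tfrac{1}{n(n-1)}(1 - 2^{n-3}/(n-2)!)$; lower-dimensional faces of each dimension $j$ with $3 \leq j \leq n-3$ produce one term of the sum $\sum_j$, with the factor $(j!-2^{j-1})$ accounting for the number of faces of dimension $j$ that are not simplicial corners and $(n-1)!/((j+1)!\,2^{j-1} j!)$ counting the number of such $j$-faces of the half-cube.

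The remaining terms are the prism-like contributions: pyramids over pairs of apex vertices handled by \eqref{AffEquaRelation} produce $2^{n-1}/n!$ and $2^{n-2}(n-1)/2n!$, and the product-formula \eqref{VolumeProductFormula} applied to a bottom simplex and a top simplex yields $2^{n-2}\cdot 2^{n-3}\cdot(n-3)/n!$. The even case has only one bottom apex and hence a single pyramid of volume $2^{n-2}(n-3)/2n!$ joining the half-cube to that apex, whereas the odd case is doubled and contributes correspondingly different terms — this is exactly the point where the two displayed formulas for $V_n$ diverge.

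The main obstacle will be the combinatorial bookkeeping, not the geometry. I must pair each face $F$ of the half-cube with an apex $v \in \vertex P_n$ in such a way that the resulting cones $\conv(F,v)$ are pairwise interior-disjoint and each evaluates, via the cone or product formula, to a term appearing in $V_n$ with the correct multiplicity. The enumeration of $j$-faces of $\tfrac{1}{2}H_{n-1}$ by whether or not they are simplicial corners, and then the counting of compatible apex choices, is delicate; once the indexing is fixed, the volume computations reduce to direct application of \eqref{ConeRelation}, \eqref{AffEquaRelation}, and \eqref{VolumeProductFormula} using the explicit heights $\dist(v,\aff F)$, which can be read off from the coordinates of the apices and the facet equations of the half-cube.
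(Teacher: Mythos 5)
Your approach is genuinely different from the paper's. The paper fixes the interior point $c=((1/2)^{n-1},0)$, writes $\vol(P_n)=\sum_{F}\vol(\conv(F,c))$ with $F$ ranging over the \emph{facets of $P_n$}, uses that $c$ is $\Aut(P_n)$-invariant to group the sum by facet orbits, and then simply tabulates the facet orbits: separating hyperplane, vertex set, $\vol(\conv(F,c))$, and orbit size. Each term in the displayed $V_n$ is literally one such (orbit size)$\times$(pyramid volume) product. You instead propose a layered dissection of $P_n$ by the hyperplane $x_n=0$, coning faces of the middle half-cube $\tfrac12 H_{n-1}$ outward to the apex vertices at heights $\pm1$ and filling in with join/product pieces. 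That is not a reparametrization of the paper's decomposition — it would, if carried through, give a different-looking sum, and you would then have to prove that sum equals $V_n$, a step not present in your sketch.

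As written the sketch also contains concrete errors. The factor $(j!-2^{j-1})$ is not ``the number of $j$-faces that are not simplicial corners''; it is a \emph{volume} factor, namely $j!\cdot\vol\bigl(\tfrac12 H_j\bigr)$, the scaled volume of the $j$-dimensional half-cube sitting inside the corresponding facet, and it appears in $\vol(\conv(F,c))$, not as a multiplicity. Likewise the prefactor $\tfrac{2^{n-2}(n-1)!}{(i+1)!\,2^{j-1}j!}$ is the orbit size under $\Aut(P_n)$, not a face count of the half-cube. There is also a dimensional problem in the first step: coning an $(n-2)$-dimensional face of $\tfrac12 H_{n-1}$ to a single apex vertex via \eqref{ConeRelation} yields an $(n-1)$-dimensional polytope with zero $n$-volume, so to get full-dimensional pieces you would need to cone to a pair of apexes or join to a segment via \eqref{VolumeProductFormula}, which changes the formulas you quote. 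Finally, the claim that the resulting pieces are pairwise interior-disjoint and that each matches a term of $V_n$ with the right multiplicity is exactly the bookkeeping that constitutes the lemma; deferring it leaves the argument incomplete. The cleanest fix is to adopt the paper's strategy: fix the $\Aut$-invariant interior point $c$, enumerate facet orbits of $P_n$, and sum pyramid volumes — then the lower bound follows from the fact that the enumerated facets are genuine facets, even before knowing the list is exhaustive.
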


\begin{proof}
Denote by ${\mathcal F}(P)$ the set of facets of $P$ and by $c$ the point $((1/2)^{n-1}, 0)$. We have 
\begin{equation*}
\vol(P_n)=\sum_{F\in {\mathcal F}(P_n)} \vol(\conv(F, c)).
\end{equation*}
Since $c$ is invariant under the automorphism group of $P_n$, the above
sum can be grouped by orbits of facets of $P_n$.

Below, we list the facets $F$ of $P_n$. The first line gives the separating hyperplane, the second line contains a list of incident vertices, the third line contains the volume $\vol(\conv(F, c))$ and the last line contains the size of the orbits. We frequently make use of the transformation $g$ defined by
\begin{equation*}
g(x_1, x_2, \dots, x_n) = (1-x_1, x_2, \dots, x_n).
\end{equation*}

\begin{itemize}
\item Facet $F_1$: a cross polytope
\begin{itemize}
\item $\sum_{j=1}^{n-1} x_j + (n-5)/2 x_n\geq 1$,
\item $g(e_j)$, $g(((1/2)^{n-1}, 1) - e_j)$ for $1\leq j\leq n-1$,
\item $2^{n-3}/n! (n-3)$,
\item $2^{n-2}$.
\end{itemize}
\item Facet $F_2$: a cross polytope
\begin{itemize}
\item $x_n\leq 1$,
\item $((1/2)^{n-1}, 1)\pm e_j$ for $1\leq j\leq n-1$,
\item $2^{n-1}/n!$,
\item $1$ if $n$ even, $2$ if $n$ odd.
\end{itemize}
\item Facet $F_3$: simplex
\begin{itemize}
\item $\sum_{i=1}^{n-1} x_i +(n-3)/2 x_n\geq 0$,
\item $0$, $((1/2)^{n-1}, 1) - e_j$ for $1\leq j\leq n-1$,
\item $(n-1)/2n!$,
\item $2^{n-2}$ if $n$ even, $2^{n-1}$ if $n$ odd.
\end{itemize}
\item Facet $F_4$: only if $n$ even
\begin{itemize}
\item $2x_{1} - x_{n}\geq 0$,
\item $((1/2)^{n-1}, 1)$, $(0, \frac{1}{2}H_{n-2}, 0)$ and $(-1/2, (1/2)^{n-2}, -1)$,
\item $1/n(n-1) \left( 1-2^{n-3}/(n-2)!\right)$.
\item $2(n-1)$.
\end{itemize}
\item Facet $F_5$: simplex, only if $n$ even
\begin{itemize}
\item $\sum_{i=1}^{n-1} x_i +(n-1)/2 x_n\geq 1$, 
\item $((1/2)^{n-1}, -1)$, $g(e_j)$ for $1\leq j\leq n-1$,
\item $(n-3)/2n!$,
\item $2^{n-2}$.
\end{itemize}
\item Facet $F_6$: only if $n$ odd
\begin{itemize}
\item $x_1+x_2\geq 0$,
\item $((1/2)^{n-1}, \pm 1) - e_j$ for $j=1$, $2$, $(0, 0, \frac{1}{2}H_{n-3}, 0)$,
\item $4/(n(n-1)(n-2)) \left(1 - 2^{n-4}/(n-3)!\right)$,
\item $2(n-1)(n-2)$.
\end{itemize}
\item Facet $F_7$: simplex, only if $n$ odd
\begin{itemize}
\item $\sum_{i=1}^{n-2}x_i+(n-4) x_{n-1} \geq 1$, 
\item $g(e_j)$ for $1\leq j\leq n-2$, $((1/2)^{n-2}, -1/2, \pm 1)$,
\item $(n-4)/n!$,
\item $2^{n-2}(n-1)$.
\end{itemize}
\item Facet $F_{i,j}$: for $i+j=n-2$, $j\geq 3$ and $i\geq 1$ for $n$ even, $i\geq 2$ for $n$ odd
\begin{itemize}
\item $\sum_{k=j+1}^{n-1} x_j +(1-i)/2 x_n\geq 0$,
\item $(\frac{1}{2} H_j, 0^{i+1},0)$, $((1/2)^{n-1},1)-e_k$ for $j+1\leq k\leq n-1$,
\item $\left( j!-2^{j-1}\right) (n-j-1)/2(n!)$,
\item $(i+1)! 2^{j-1} j!$.
\end{itemize}
\end{itemize}
\end{proof}

\subsection{Proof of Theorem~\ref{th:series}}

We only have to show that for every Delone polytope $P$ of $[L_n,Q_n]$ which is not equivalent to $P_n$ we have $\mu_P < \mu_{P_n}$. 

We now construct the remaining classes of Delone polytopes of $[L_n, Q_n]$: If $n$ is even we have one additional class and if $n$ is odd we two additional classes.

\begin{itemize}
\item 
If $i+j=n-1$ and $3\leq i\leq j$, we denote by $H_{i,j}$ the polytope with vertices
\begin{equation*}
(\frac{1}{2}H_i, 0^{j-1}, 0),  ((1/2)^{i}, (1/2)^j-g(\frac{1}{2}H_j), 1).
\end{equation*}
The size of the stabilizer is
\begin{equation*}
|\Stab(H_{i,j})|=\left\lbrace\begin{array}{ll}
2^{i-1} i! 2^{j-1}j!, &  \textrm{if }i\not= j,\\
2\times 2^{i-1} i! 2^{j-1}j!, &  \textrm{if }i= j.
\end{array}\right.
\end{equation*}
Using the formula for the product polytope we get
\begin{equation*}
\vol(H_{i,j})=\left(1-\frac{2^{i-1}}{i!}\right)\left(1-\frac{2^{j-1}}{j!}\right)\frac{i! j!}{n!}
=(i!-2^{i-1})(j!-2^{j-1})\frac{1}{n!}.
\end{equation*}
We set $C=n-3$ if $n$ is even and $C=n-5$ if $n$ is odd.
The center of $H_{i,j}$ is
\begin{equation*}
c_{H_{i,j}}=((1/2)^i, 0^j, \alpha), \text{ with }\alpha=\frac{C+j-i}{2C}.
\end{equation*}
The squared radius of the sphere around $H_{i,j}$ is
\begin{equation*}
\mu_{H_{i,j}}=\frac{C^2+2C(n-1)+(j-i)^2}{16C} < \mu_{P_n}.
\end{equation*}

\item
If $n$ is odd, then the simplex $S_n$ with vertex set
\begin{equation*}
0, (0^{n-1}, 2),  ((1/2)^{n-1}, 1) - e_j, \text{ with } j = 1, \ldots, n-1,
\end{equation*}
is a Delone polytope. We have 
\begin{eqnarray*}
|\Stab(S_n)| & = & 2(n-1)!,\\
\vol(S_n) & = & \frac{n-3}{n!},\\
c_{S_n} & = & ((1/(n-3))^{n-1}, 1),\\
\mu_{S_n} & = & \frac{n-5}{4}+\frac{n-1}{(n-3)^2} < \mu_{P_n}.
\end{eqnarray*}
\end{itemize}

Now we finish the proof by a volume computation showing that our list of orbits is complete. Denote by $O(D_1)$, \dots, $O(D_r)$ the orbits of Delone polytope of $[L_n,Q_n]$ of representative $D_i$.
On the one hand, we have 
\begin{equation*}
2=\sum_{i=1}^{r} |O(D_i)| \vol(D_i).
\end{equation*}
One the other hand, we have the equality
\begin{equation*}
2= \sum_{i=1}^{\frac{n-2}{2}} |O(H_{i,j})| \vol(H_{i,j}) + 2 V_n,
\end{equation*}
if $n$ is even, and
\begin{equation*}
2 = |O(S_n)|\vol(S_n)+\sum_{i=1}^{\frac{n-1}{2}} |O(H_{i,j})| \vol(H_{i,j}) + V_n,
\end{equation*}
if $n$ is odd. This implies that $\vol(P_n)=V_n$ and that the list of orbits of Delone
polytopes is complete. This finishes the proof of the theorem.

\section*{Acknowledgements}

We thank Peter McMullen for proposing the name \emph{covering
  pessima}. The third author thanks Rudolf Scharlau for his suggestion to work on the covering problem and Joseph Oesterl\'e for an interesting discussion during the DIAMANT symposium in November 2010. We thank the referee for careful reading of our manuscript and insightful comments.

We started this research during the Junior Trimester Program (February
2008--April 2008) on ``Computational Mathematics''. Then, part of
this research was done at the Mathematisches Forschungsinstitut
Oberwolfach during a stay within the Research in Pairs Programme from
May 3, 2009 to May 16, 2009. We thank both institutes for their
hospitality and support.

\end{document}